\numberwithin{equation}{section}
\newtheorem{theorem}{Theorem}[section]
\newtheorem{lemma}[theorem]{Lemma}
\newtheorem{proposition}[theorem]{Proposition}
\theoremstyle{definition}
\theoremstyle{definition}
\newtheorem{remark}[theorem]{Remark}
\newtheorem{example}{Example}
\newtheorem{definition}[theorem]{Definition}
\newtheorem{definition lemma}[theorem]{Definition/Lemma}
\newcommand{\R}{\mathbb{R}}
\newcommand{\Z}{\mathbb{Z}}
\newcommand{\N}{\mathbb{N}}
\renewcommand{\P}{\mathbb{P}}
\newcommand{\ud}{\mathrm{d}}
\newcommand{\size}{\mathrm{Size}}
\newcommand{\energy}{\mathrm{Energy}}
\newcommand{\eps}{\varepsilon}
\newcommand{\VarC}[1]{\mathrm{Var}^{#1}\mathscr{C}}
\author{Fr\'{e}d\'{e}ric Bernicot \and Marco Vitturi}
\address{CNRS - Universit\'e de Nantes \\ Laboratoire Jean
Leray \\ 2, rue de la Houssini\`ere
44322 Nantes cedex 3, France}
\email{frederic.bernicot@univ-nantes.fr \and marco.vitturi@univ-nantes.fr }
\title[Bilinear RdF with non-smooth squares]{Bilinear Rubio de Francia inequalities for collections of non-smooth squares}
\keywords{Bilinear Fourier multipliers, orthogonality}
\subjclass{42A45}
\begin{document}

%
%
\begin{abstract}
Let $\Omega$ be a collection of disjoint dyadic squares $\omega$, let $\pi_\omega$ denote the non-smooth bilinear projection onto $\omega$
\[ \pi_\omega (f,g)(x):=\iint \mathds{1}_{\omega}(\xi,\eta) \widehat{f}(\xi) \widehat{g}(\eta) e^{2\pi i (\xi + \eta) x} \ud \xi \ud\eta \]
and let $r>2$. We show that the bilinear Rubio de Francia operator
\[ \Big(\sum_{\omega\in\Omega} |\pi_{\omega} (f,g)|^r \Big)^{1/r} \] 
is $L^p \times L^q \rightarrow L^s$ bounded with constant independent of $\Omega$ whenever $1/p + 1/q = 1/s$, $r'<p,q<r$, and $r'/2 < s < r/2$.
\end{abstract}

\maketitle
%
  
%
%
%
\section{Introduction}

Classical Littlewood-Paley theory on the real line is a staple of linear harmonic analysis and has proven vastly important in its development. It encodes a principle of orthogonality in $L^p$ spaces even when $p\neq 2$ for dyadically separated frequencies, and can thus be seen as a substitute for Plancherel's identity; this usually allows one to decouple the action of a multiplier on each dyadic frequency and deal with them separately. Generalizations of the linear Littlewood-Paley inequalities were first considered by Carleson in \cite{Carleson_LP} (later reproved in a different way by Cordoba in \cite{Cordoba}) for the special case where one replaces the Littlewood-Paley dyadic intervals $[2^k,2^{k+1}], k\in\Z$ by the intervals $[n,n+1], n\in\Z$. Later, Rubio de Francia in \cite{RubioDeFrancia} extended Carleson's result to arbitrary collections of disjoint intervals. In particular, he proved the following: let $\mathcal{I} = \{I_j\}_j$ be a collection of disjoint intervals and define the Rubio de Francia square function
\[ \mathrm{RdF}^2_{\mathcal{I}}f(x) := \Big(\sum_{j} |\pi_{I_j} f(x)|^2 \Big)^{1/2},  \]
where $\pi_{I}$ is the frequency projection operator defined by
\[ \widehat{\pi_{I}f}(\xi) := \mathds{1}_{I}(\xi) \widehat{f}(\xi); \]
then for all $2\leq p < \infty$ it holds that for all $f\in L^p(\R)$ 
\begin{equation}\label{eqn:RdF_inequality} 
\|\mathrm{RdF}^2_{\mathcal{I}}f\|_{L^p(\R)} \lesssim_p \|f\|_{L^p(\R)} 
\end{equation}
(with constant independent of $\mathcal{I}$). The inequality is false in general for $p<2$, as was known since \cite{Carleson_LP} - this corresponds to a failure of orthogonality in $L^p$ spaces for small $p$'s. More in general, by the same methods one can prove for a generic $r>2$ that the Rubio de Francia $r$-function
 \[ \mathrm{RdF}^r_{\mathcal{I}}f(x) := \Big(\sum_{j} |\pi_{I_j} f(x)|^r \Big)^{1/r} \]
 is bounded on $L^p$ for all $r'< p < \infty$ (the lowerbound being sharp; see \cite{TaoCowling} for a proof). The condition $r\geq 2$ is necessary, as can be seen for example by considering the collection of Littlewood-Paley intervals. Known proofs of \eqref{eqn:RdF_inequality} (see \cite{Journe}, \cite{Sjoelin},  \cite{Soria}, \cite{Lacey_RdF}) rely on an interpolation between the trivial $L^2$ case and (a substitute for) the $L^\infty$ endpoint (or dually between $L^2$ and $H^1$, as in \cite{Bourgain}). See also \cite{BeneaMuscalu_preprint} about an alternative proof for such inequalities as well as for a bilinear generalization, involving a collection of paraproducts-type operators. Higher dimensional versions of the inequalities have been first shown in \cite{Journe}.\\
 A natural question is whether similar orthogonality principles exist in the bilinear setting and to what extent. That is, given bilinear multiplier operators $T_j$ with disjoint frequency supports in the frequency plane $\widehat{\R^2}$, under what conditions does it hold that, say, the square function
 \[ \Big(\sum_{j} |T_j (f,g)|^2\Big)^{1/2} \]
 is bounded from $L^p \times L^q$ to $L^s$? Some results are known for special collections of supports. Perhaps the first one is to be found in Lacey's \cite{Lacey_bilinearSqF}, where he proves the $L^p \times L^q \rightarrow L^2$ boundedness of the bilinear square function
 \[ f,g \mapsto \Big(\sum_{n \in \Z} \Big|\iint \chi(\xi - \eta - 2n) \widehat{f}(\xi) \widehat{g}(\eta) e^{2\pi i (\xi + \eta) x} \ud \xi \ud \eta\Big|^2 \Big)^{1/2} \]
 for $p,q\geq 2$ such that $1/p + 1/q = 1/2$ (later extended to any $1/p + 1/q = 1/s$ in \cite{MohantyShrivastava},\cite{BernicotShrivastava}), where $\chi$ is a $C^\infty$ function that is identically $1$ in $[-1/2,1/2]$ and vanishes outside $[-1,1]$. Thus here the frequency supports consist of (smoothened) diagonal strips of roughly unit width and unit separation. This was later extended by the first author in \cite{Bernicot} to the case of non-smooth diagonal strips, that is where one replaces the smooth function $\chi$ above by the non-smooth $\mathds{1}_{[-1/2,1/2]}$. The discontinuity at the boundary of the strip makes the analysis inherently more complicated (the same phenomenon that arises in the study of, for example, the Bilinear Hilbert transform).\\
A further and more recent example of a bilinear $r$-function of the form above is given by 
\[ f,g \mapsto \Big(\sum_{j \in \Z} \Big|\iint\limits_{a_j < \xi < \eta < a_{j+1} } \widehat{f}(\xi) \widehat{g}(\eta) e^{2\pi i (\xi + \eta)x }\ud \xi \ud \eta \Big|^r\Big)^{1/r} \]
for a sequence of strictly increasing real numbers $a_j < a_{j+1}$. It can be thought of as a bilinear Rubio de Francia operator for iterated Fourier integrals, which finds its motivation naturally in the analysis of the stability of solutions to AKNS systems. It was proven in \cite{BeneaMuscalu} that for $r\geq 2$ this operator is $L^p \times L^q \to L^s$ bounded for the same exponents for which the Bilinear Hilbert transform is bounded (that is, under the necessary condition $1/p + 1/q = 1/s$, at least when $p,q > 1$ and $s > 2/3$); moreover, it is also bounded when $1 \leq r < 2$, although the range depends on $r$ in this case. We refer the reader to \cite{BeneaMuscalu} for details and for the aforementioned physical motivation.\\
 \par
 In this paper we are interested in bilinear operators built out of bilinear projections whose frequency supports consist of squares in the frequency plane $\widehat{\R^2}$. Here the reference we have in mind is \cite{BeneaBernicot} by Benea and the first author, in which the following bilinear versions of Rubio de Francia $r$-functions are considered: let $\Omega$ be a collection of disjoint squares in $\widehat{\R^2}$ and let $r$ be fixed, then define the operator 
 \[ S_{\Omega}^r(f,g)(x) := \Big(\sum_{\omega \in \Omega} \Big|\iint \chi_{\omega}(\xi, \eta) \widehat{f}(\xi) \widehat{g}(\eta) e^{2\pi i (\xi + \eta) x} \ud \xi \ud \eta \Big|^r \Big)^{1/r}, \]
 where $\chi_{\omega}$ is a $C^{\infty}$ function that is identically $1$ on $\frac{1}{2}\omega$ and vanishes outside $\omega$. In \cite{BeneaBernicot} the authors prove the following theorem:
 %
 %
 \begin{theorem}[\cite{BeneaBernicot}]\label{thm:smooth_squares_theorem}
Let $\Omega$ be a collection of disjoint squares in $\widehat{\R^2}$ and let $r>2$. Then 
\begin{equation}
\| S_{\Omega}^r(f,g)\|_{L^s(\R)} \lesssim_{p,q} \|f\|_{L^p} \|g\|_{L^q}
\end{equation}
for all $p,q,s$ such that $1/p + 1/q = 1/s$, $r' < p,q < \infty$, $r'/2 < s < r$. In particular, the constant is independent of $\Omega$.
 \end{theorem}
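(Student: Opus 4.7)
The proof hinges on the factorisation
\[ \pi_\omega(f,g)(x)\;=\;\pi_{I_\omega}(f)(x)\cdot\pi_{J_\omega}(g)(x), \]
valid because $\mathds{1}_\omega(\xi,\eta)=\mathds{1}_{I_\omega}(\xi)\mathds{1}_{J_\omega}(\eta)$ for a square $\omega=I_\omega\times J_\omega$. The task thus reduces to estimating
\[ \Big\|\Big(\sum_{\omega\in\Omega}|\pi_{I_\omega}(f)|^{r}|\pi_{J_\omega}(g)|^{r}\Big)^{1/r}\Big\|_{L^s}. \]
The fundamental difficulty is that although the squares $\omega$ are disjoint, the multisets $\{I_\omega\}_\omega$ and $\{J_\omega\}_\omega$ may have unbounded multiplicities and their elements nest across dyadic scales; the classical linear Rubio de Francia inequality cannot be applied directly to either side.

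My plan is to dualise. Using $L^s(\ell^r)^*=L^{s'}(\ell^{r'})$ and the self-adjointness of the projections $\pi_{I_\omega}$, the statement becomes
\[ \Big\|\sum_{\omega\in\Omega}\pi_{I_\omega}\bigl(\pi_{J_\omega}(g)\,h_\omega\bigr)\Big\|_{L^{p'}}\;\lesssim\;\|g\|_{L^{q}}\,\|h\|_{L^{s'}(\ell^{r'})} \]
for every sequence $h=(h_\omega)_\omega$. Decomposing $\Omega=\bigsqcup_k \Omega_k$ by side-length $2^k$ restores disjointness of $\{I_\omega\}_{\omega\in\Omega_k}$ within each scale. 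The dual form of the linear Rubio de Francia inequality (valid precisely when $p'<r$, that is $p>r'$) combined with the vector-valued $L^{p'}(\ell^{r'})$-boundedness of the frequency projections controls the scale-$k$ piece by
\[ \Big\|\Big(\sum_{\omega\in\Omega_k}|\pi_{J_\omega}(g)|^{r'}|h_\omega|^{r'}\Big)^{1/r'}\Big\|_{L^{p'}}. \]
A pointwise bound by the Carleson maximal operator $g^*:=\sup_J|\pi_J(g)|$, H\"older's inequality with $1/p'=1/q+1/s'$, and the Carleson--Hunt estimate $\|g^*\|_{L^q}\lesssim\|g\|_{L^q}$ (valid for $q>1$) together give the single-scale bound $\lesssim\|g\|_{L^q}\|h\|_{L^{s'}(\ell^{r'})}$, uniformly in $k$.

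The main obstacle is the assembly across scales, where a naive triangle inequality in $L^{p'}$ diverges. To circumvent this I would insert Littlewood-Paley decompositions of $f$, of $g$, and of the bilinear output, reducing the summation to contributions $T_{j,j'}(f,g)$ associated with single frequency annuli; each such contribution obeys a bilinear bound of the form $\|T_{j,j'}(f,g)\|_{L^s}\lesssim\|\Delta_j f\|_{L^p}\|\Delta_{j'} g\|_{L^q}$ by the single-scale argument above. Reassembly then proceeds by the quasi-triangle inequality in $L^{s/r}$---available precisely because $s<r/2$ and hence $s/r<1$---combined with a square-function estimate for the output Littlewood-Paley pieces. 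The full exponent window $r'<p,q<r$ and $r'/2<s<r/2$ reflects the combined constraints of these three ingredients: the lower bound $p,q>r'$ from linear Rubio de Francia, the upper bound $p,q<r$ from its dual, and the condition $s<r/2$ enabling the quasi-Banach aggregation in scales.
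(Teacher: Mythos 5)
This statement is Theorem~\ref{thm:smooth_squares_theorem}, which the paper \emph{cites} from \cite{BeneaBernicot} without reproducing a proof; there is therefore no internal proof to compare against. Your sketch is also not an independently correct proof, for several reasons.

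First, the exponent range you claim to obtain, $r'<p,q<r$ and $r'/2<s<r/2$, is strictly narrower than what the theorem asserts, which is $r'<p,q<\infty$ and $r'/2<s<r$. You appear to have conflated the smooth Theorem~\ref{thm:smooth_squares_theorem} with the non-smooth Theorem~\ref{mainthm} (the paper's actual contribution), which does have the narrower window. Even if the rest of your argument were sound, it would not establish the cited result.

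Second, the claim that ``decomposing $\Omega=\bigsqcup_k\Omega_k$ by side-length restores disjointness of $\{I_\omega\}_{\omega\in\Omega_k}$'' is false: two disjoint squares of the same side-length can project to the \emph{same} interval $I$ on the $\xi$-axis (stacked vertically, with disjoint $J$-intervals). Any single-scale argument must first group by the common $I$; this is a real bookkeeping issue that your sketch skips, although it is in principle repairable.

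Third, and most seriously, the assembly across scales via Littlewood--Paley decompositions of $f$, $g$ and the output is asserted rather than argued, and it is not clear it can be made to work. The frequency support of $\pi_\omega(f,g)$ is $\omega_1+\omega_2$, whose location in the output frequency line is essentially unrelated to the side-length of $\omega$: a tiny square near the anti-diagonal produces low output frequencies, a tiny square far from it produces high ones. Consequently the interaction between a fixed Littlewood--Paley annulus of $f$ (or of the output) and the scales $k$ of the squares is not at all diagonal, so the ``single-scale bilinear bound $\|T_{j,j'}(f,g)\|_{L^s}\lesssim\|\Delta_j f\|_{L^p}\|\Delta_{j'}g\|_{L^q}$ plus quasi-triangle inequality in $L^{s/r}$'' reduction is not available without substantial further work. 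This is precisely why \cite{BeneaBernicot} (and this paper) resort to a genuine time-frequency analysis with tri-tiles, columns/rows, sizes and energies, rather than a Littlewood--Paley scale decomposition. A minor point: the theorem concerns the \emph{smooth} projections $\chi_\omega$, and your factorisation is written with $\mathds{1}_\omega$; the factorisation still holds if one chooses the smooth cutoffs to be tensor products, but you should say so.
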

 This result is to be thought of as a bilinear orthogonality principle for collections of (smoothened) frequency squares in the same way as the Rubio de Francia theorem is for the linear case. Observe however that the square function case $r=2$ is not covered by the theorem - its boundedness is currently an open problem. We remark that the condition $r' < p,q$ is necessary (to see why it suffices to consider a collection of squares like the one given in Example \ref{ex:line} below).
 \par
 Our interest here is to extend the results of \cite{BeneaBernicot} to the case where the smooth characteristic function $\chi_\omega$ above is replaced by the non-smooth characteristic function $\mathds{1}_\omega$. In particular, let $\Omega$ be a collection of disjoint squares $\omega = \omega_1 \times \omega_2$ in $\widehat{\R^2}$ and denote by $\pi_{\omega}$ the non-smooth bilinear frequency projection onto the square $\omega$, that is 
\[ \pi_{\omega} (f,g)(x) := \iint \mathds{1}_{\omega}(\xi,\eta) \widehat{f}(\xi)\widehat{g}(\eta) e^{2\pi i (\xi + \eta)x} \ud \xi \ud \eta, \]
which in particular factorizes as $\pi_\omega = \pi_{\omega_1} \otimes \pi_{\omega_2}$. We are interested in the bilinear operator 
\[ f,g \mapsto T_{\Omega}^r(f,g)(x):= \Big(\sum_{\omega\in\Omega} |\pi_{\omega}(f,g)(x)|^r \Big)^{1/r}, \qquad r\geq 2, \]
and specifically in proving bounds of the form
\begin{equation}\label{eqn:target_estimate}
\|T_{\Omega}^r(f,g)\|_{L^s} \leq C \|f\|_{L^p} \|g\|_{L^q};
\end{equation}
we denote by $C_{p,q,s,\Omega}$ the best constant $C$ such that the above inequality holds for all $f \in L^p, g \in L^q$ (we consider $r$ fixed). The usual scaling argument shows that a necessary condition is that the exponents $p,q,s$ satisfy H\"{o}lder's relationship, that is it must be 
\[ \frac{1}{p} + \frac{1}{q} = \frac{1}{s}\]
(and therefore $C_{p,q,s,\Omega} = C_{p,q,\Omega}$).\\
We consider some examples in order to get acquainted with the problem at hand.
%
%
\begin{example}\label{ex:line}
Let $r\geq 2$. Suppose $\Omega_{\text{line}}$ consists of an arbitrary number of disjoint squares that all intersect a given vertical line, that is there exists a frequency $\xi_0$ such that for every $\omega \in \Omega_{\text{line}}$ we have $\xi_0 \in \omega_1$. Observe that the frequency intervals $\omega_2$ must be all disjoint. We can bound pointwise 
\[ T_{\Omega_{\text{line}}}^r(f,g)(x) \leq \Big(\sum_{\omega\in\Omega_{\text{line}}} |\pi_{\omega_2}g(x)|^r \Big)^{1/r}\cdot \sup_{\omega\in\Omega_{\text{line}}}|\pi_{\omega_1} f(x)| \leq  \mathrm{RdF}^{r}(g)(x) \cdot \mathscr{C}f(x), \]
where $\mathscr{C}$ denotes the Carleson operator, which is bounded on $L^p$ for all $1<p<\infty$ (by the Carleson-Hunt theorem, \cite{Carleson}, \cite{Hunt}), and therefore we get that for $p>1$ and $q>r'$ (or $q \geq 2$ if $r\geq 2$) we can estimate for this particular collection
\[ \|T_{\Omega_{\text{line}}}^r(f,g)(x)\|_{L^s}\lesssim_{p,q} \|f\|_{L^p}\|g\|_{L^q}, \]
or in other words $C_{p,q,\Omega_{\text{line}}} \lesssim_{p,q} 1$ in the stated range.
\end{example}
%
%
%
\begin{example}\label{ex:grid}
Let $r>2$ be fixed and consider now a collection of $N^{1/2} \times N^{1/2}$ points in $\widehat{\R^2}$ arranged in a rectangular grid with large spacing, and suppose that each point labeled by $(i,j)$ is the center of a square $\omega^{ij}$ and furthermore that the squares are all disjoint (their sidelengths can be all distinct). We let $\Omega_{\text{grid}} := \{\omega^{ij}\}_{i,j \leq N^{1/2}}$ and we try to bound $T_{\Omega_{\text{grid}}}^r$ in some range. Observe that since a priori
\[ |\pi_{\omega}(f,g)(x)| \leq \mathscr{C}f(x) \cdot \mathscr{C}g(x) \]
we always have the trivial bound 
\[ \|T_{\Omega_{\text{grid}}}^r(f,g)\|_{L^s} \lesssim_{p,q} N^{1/r} \|f\|_{L^p}\|g\|_{L^q} \]
for $p,q>1$. We can beat this trivial bound of $C_{p,q,\Omega_{\text{grid}}} \lesssim_{p,q} N^{1/r}$ by the following argument: since for a fixed $i$ the squares $\omega^{ij}$ are such that $\omega^{ij}_1$ all contain a same frequency as in the example above, we can bound pointwise
\begin{align*}
\Big(\sum_{\omega\in\Omega_{\text{grid}}} |\pi_{\omega}(f,g)(x)|^r \Big)^{1/r} = & \Big(\sum_{i \leq N^{1/2}} \sum_{j \leq N^{1/2}} |\pi_{\omega^{ij}_1}f(x) \cdot \pi_{\omega^{ij}_2}g(x)|^r \Big)^{1/r} \\
\leq & \mathscr{C}f(x) \cdot \Big(\sum_{i \leq N^{1/2}} \sum_{j \leq N^{1/2}} |\pi_{\omega^{ij}_2}g(x)|^r \Big)^{1/r} \\
\leq & \mathscr{C}f(x) \cdot \Big(\sum_{i \leq N^{1/2}} (\VarC{r}g(x))^r \Big)^{1/r} \\
= & N^{1/{2r}}\mathscr{C}f(x) \VarC{r}g(x),
\end{align*}
where $\VarC{r}$ is the Variational Carleson operator
\[ \VarC{r}f(x) := \sup_{M} \sup_{\xi_{1} < \cdots < \xi_{M}} \Big(\sum_{j=1}^{M-1} |\pi_{[\xi_j, \xi_{j+1}]}f(x)|^r\Big)^{1/r}. \]
It is known from \cite{OberlinSeegerTaoThieleWright} that this operator is $L^p \rightarrow L^p$ bounded for $r' < p < \infty$ if $r>2$, as is the case, and therefore we get for the range $p>1, q>r'$ an improvement in the dependence of the constant on the cardinality of $\Omega$ (specifically, $C_{p,q,\Omega_{\text{grid}}} \lesssim_{p,q} (\#\Omega)^{1/{2r}}$ instead of $(\#\Omega)^{1/r}$).
\end{example}
%
%
It is natural to conjecture that for some range of exponents (possibly as large as $p,q > r'$, like in \cite{BeneaBernicot}) one should have $C_{p,q,\Omega}\lesssim_{p,q} 1$ for every admissible $\Omega$, or in other words that inequality \eqref{eqn:target_estimate} should hold with constant $C_{p,q,\Omega}$ independent of $\Omega$, and specifically independent of its cardinality $\#\Omega$. Indeed, this is achieved in the $p,q>r'$ range by product-like collections of rectangles, that is collections of the form $\Omega = \{I\times J \text{ s.t. } I \in \mathcal{I}, J \in \mathcal{J}\}$, where $\mathcal{I}, \mathcal{J}$ are collections of disjoint intervals; this can be readily seen by a factorization of the operator and an application of Rubio de Francia's theorem.
Simple pointwise arguments like the one given in Example \ref{ex:grid} are unlikely to give such a result. However, by combining similar observations with the time-frequency analysis of \cite{BeneaBernicot} and some further ideas from \cite{Benea_thesis}, \cite{BeneaMuscalu_preprint} (as for example the consideration in the time-frequency analysis of an exceptional subset built from non-local operators), we are able to confirm the conjecture in the range given by $r' < p,q < r$.\\
%
%
More precisely, let $\mathcal{D}$ denote the collection of dyadic intervals, that is intervals of the form $[n 2^k, (n+1)2^k]$ for arbitrary $n,k \in \Z$. Then our main result can be stated as follows.
\begin{theorem} \label{mainthm}
Let $r > 2$ be fixed. Then for all $p,q,s$ such that 
\[ \frac{1}{p} + \frac{1}{q} = \frac{1}{s} \]
and 
\[ r' < p,q < r, \quad r'/2 < s < r/2 \]
it holds that for every arbitrary collection $\Omega \subset \mathcal{D} \times \mathcal{D}$ of disjoint (dyadic) squares in $\widehat{\R^2}$ the estimate
\begin{equation}\label{eqn:main_theorem_estimate}
\Big\| \Big(\sum_{\omega\in\Omega} |\pi_{\omega}(f,g)(x)|^r \Big)^{1/r} \Big\|_{L^s} \lesssim_{p,q,r} \|f\|_{L^p} \|g\|_{L^q},
\end{equation}  
holds true for every $f \in L^p, g \in L^q$.
\end{theorem}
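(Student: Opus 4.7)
The plan is to combine the time-frequency framework of \cite{BeneaBernicot} (developed there for the smooth variant $S_\Omega^r$) with the handling of sharp frequency cutoffs from \cite{Bernicot} and the use of non-locally-defined exceptional sets from \cite{Benea_thesis, BeneaMuscalu_preprint}. First, by generalized restricted-weak-type duality (in the sense of Muscalu--Tao--Thiele, to accommodate the regime $s<1$), the estimate \eqref{eqn:main_theorem_estimate} is reduced to the trilinear form bound
\[ \Big|\sum_{\omega\in\Omega}\int_\R \pi_{\omega_1}f(x)\,\pi_{\omega_2}g(x)\,a_\omega(x)\,\ud x\Big| \lesssim |F|^{1/p}|G|^{1/q}|H|^{1/s'} \]
for $f=\mathds{1}_F$, $g=\mathds{1}_G$ and sequences $(a_\omega)_\omega$ satisfying $\|(a_\omega(x))_\omega\|_{\ell^{r'}}\leq \mathds{1}_H(x)$, after removing from one of $F,G,H$ a suitable exceptional set.

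Next I would discretize. Each factor $\pi_{\omega_i}$, $\omega_i\in\mathcal{D}$, is expanded into wave packets indexed by tiles at scales at least $|\omega_i|$. The sharp characteristic $\mathds{1}_{\omega_i}$ is split into a smooth ``bulk'' piece plus two ``boundary'' pieces realized by shifted half-plane (Cauchy) projections; because $\omega_i$ is dyadic, the endpoints align with the tile grid. The bulk is treated by the wave-packet expansion of \cite{BeneaBernicot}, while the boundary contributions are organized via a Haar-type telescoping over coarser scales, in the spirit of \cite{Bernicot}. The outcome is a finite family of model trilinear forms
\[ \sum_{P\in\mathbf{P}}|I_P|^{-1/2}\,\langle f,\phi_{P_1}\rangle \langle g,\phi_{P_2}\rangle \langle b_P,\phi_{P_3}\rangle, \]
where $P=(P_1,P_2,P_3)$ ranges over tri-tiles and $b_P$ aggregates the values of $(a_\omega)_\omega$ at the scale of $P$.

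I would then organize tri-tiles into trees and introduce three sizes: $\mathrm{size}_1,\mathrm{size}_2$ are the familiar Carleson/BMO-type sizes adapted to $f,g$, while $\mathrm{size}_3$ captures the $L^{s'}(\ell^{r'})$ mass of $(a_\omega)$ over a tree's spatial support. Following \cite{Benea_thesis, BeneaMuscalu_preprint}, good bounds on the sizes are obtained only on the complement of an exceptional set built from non-local operators: the Carleson operator $\mathscr{C}$ (controlling $\mathrm{size}_1,\mathrm{size}_2$), the Variational Carleson operator $\VarC{r}$ (bounded on $L^p$ for $p>r'$ by \cite{OberlinSeegerTaoThieleWright}, essential for $\mathrm{size}_3$), and appropriate bilinear maximal-type operators. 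The standard tree-size/energy interpolation then closes the estimate.

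The hard part will be the discretization step, where three pressures must be reconciled: the sharpness of $\mathds{1}_\omega$ forces multiscale boundary contributions at every scale coarser than $|\omega_i|$; the arbitrariness of $\Omega$ rules out any product-structure shortcut; and the $\ell^r$-orthogonality encoded by the Rubio de Francia sum must survive both the summation over scales and the summation over $\omega$. Making this work requires $\VarC{r}$-type control via the exceptional set, and the H\"{o}lder interplay of the maximal operators there is precisely what forces $s\in(r'/2,r/2)$ and $r'<p,q<r$ --- a strictly smaller range than in the smooth Theorem~\ref{thm:smooth_squares_theorem}. A secondary but nontrivial point will be establishing the $\mathrm{size}_3$ estimate for trees whose tiles straddle the boundaries of several $\omega$'s, which requires a careful variational-Carleson argument at the tile level.
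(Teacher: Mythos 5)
Your plan diverges from the paper's actual proof at two structural junctures, and both divergences open gaps that the paper's architecture was specifically designed to close.

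On the discretization: you expand each $\pi_{\omega_i}$ into wave packets ``indexed by tiles at scales at least $|\omega_i|$'' and then resolve the sharp cutoffs into bulk and boundary pieces with a Haar-type telescoping over coarser scales. This is precisely the multiscale/Whitney-type resolution that the paper explicitly declines in \S\ref{section:discretization}: the discretization there keeps exactly one spatial scale per frequency square (namely $|\omega|^{-1}$), and the coefficients $f(P_1), g(P_2), \mathbf{h}(P_3)$ are local $L^{r_0}$ averages on $I_P$ rather than wave-packet inner products, so that the analysis runs ``free from wavepackets.'' The distinction is quantitative, not aesthetic: once tiles at all coarser scales enter, the energy quantities are no longer controlled by the Variational Carleson operator uniformly in $\#\Omega$, and a $(\#\Omega)^{\eps}$-type loss appears. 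This is exactly the failure mode discussed in Remark~\ref{remark_explanation_r_0}, and it is also why the paper works with an auxiliary exponent $r_0$ strictly between $2$ and $r$ (so that $\VarC{r_0}$ is bounded and $r_0<r$ leaves room in the H\"{o}lder exponents of Proposition~\ref{column_row_estimate}). Your proposal uses $\VarC{r}$ directly, without the $r_0$ wiggle room, and supplies no mechanism to absorb the loss. As a smaller bookkeeping point, in the paper $\VarC{r_0}$ controls $\energy^1$ and $\energy^2$ (and the extra averages in Lemma~\ref{extra_term_control}); $\size^3$ and $\energy^3$ are controlled by a simpler disjointness argument (Proposition~\ref{energy_3_control}), not by a variational operator as you suggest.

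On the range of exponents: you assert that ``the standard tree-size/energy interpolation then closes the estimate'' in the full range $r'<p,q<r$. In fact a direct run of the time-frequency argument only produces the narrower range $r_0<p,q<r$ (this is Proposition~\ref{prop:r=2}, with $r_0>2$). To reach the stated range one needs a second, separate interpolation in the $\ell$-exponent: the trivial $T^\infty_\Omega$ bound from Carleson--Hunt (Proposition~\ref{prop:r=infty}) is interpolated with Proposition~\ref{prop:r=2} for exponents close to $2$, via Silva's generalized-restricted-weak-type interpolation (Lemma~\ref{lemma:interpolation}), exploiting the fact that the exceptional set (hence the major subset) can be chosen independently of the endpoint. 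Without this step your argument cannot descend below $p,q=r_0>2$, so it does not prove the theorem as stated.
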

%
%
\begin{remark}
In Theorem \ref{thm:smooth_squares_theorem} (from \cite{BeneaBernicot}) above, the statement encompasses arbitrary non-dyadic squares; this is because of the flexibility provided by the smoothness of the $\chi_\omega$ functions. However, in the non-smooth case things are not as simple. One can replace the assumption that the squares are dyadic with a well-separation assumption: namely, Theorem \ref{mainthm} still holds if we assume that $\Omega$ is a finite collection of arbitrary squares such that $4\omega \cap 4\omega' = \emptyset$ whenever $\omega \neq \omega'$ (essentially because then each square is contained in a unique dyadic square of comparable size). In the linear case it's always possible to reduce to a well-separated case (see \cite{RubioDeFrancia}) by means of classical Littlewood-Paley theory, but in the bilinear case such tools are not currently available.
\end{remark}
%
%
\begin{remark}
The condition $r>2$ is a shortcoming we inherit from \cite{BeneaBernicot}. However, for the $r=2$ case, one can deduce from the above theorem and H\"{o}lder's inequality that $T^2_{\Omega}$ is $L^2 \times L^2 \rightarrow L^1$ bounded with constant at most $O_{\eps}(\#\Omega^\eps)$ for any $\eps>0$. Indeed, we can bound pointwise $T_{\Omega}^2 (f,g) \leq \#\Omega^{\eps} T_{\Omega}^r (f,g)$, where $1/2 = 1/r + \eps$, and conclude using Theorem \ref{mainthm} for exponents $p=q=2$.
\end{remark}
This result partially confirms the natural conjecture stated above. Observe the range of boundedness provided by Theorem \ref{mainthm} is smaller than the corresponding one in Theorem \ref{thm:smooth_squares_theorem} above. We explain the reason why in Remark \ref{remark:motivation_for_smaller_range}. Figure \ref{figure:range_p_q} in section \S\ref{section:proof_of_main_theorem} provides a graphical illustration of the range obtained in Theorem \ref{mainthm}.\\ 
\par
%
%
The proof of Theorem \ref{mainthm} is presented in section \S\ref{section:proof}, and is split into a number of steps. The result is obtained by interpolation between a boundedness result for $T^{\infty}_{\Omega}$ (a trivial consequence of the Carleson-Hunt theorem) and a partial boundedness result for $T^r_{\Omega}$ when $r$ is close to $2$. The latter is obtained by adapting the time-frequency methods of \cite{BeneaBernicot} to our setup, but using non-local operators to construct the exceptional set as in \cite{Benea_thesis}, \cite{BeneaMuscalu_preprint}. The necessary preliminaries are carried out in sections \S\ref{section:discretization} - \S\ref{section:general_estimate}. The proof is concluded in \S\ref{section:proof_of_main_theorem}, where the particular interpolation result we will use (Lemma \ref{lemma:interpolation}) is also presented. Finally, we present a simple application in \S\ref{section:application}.
%
%
\subsection*{Acknowledgements} Both authors are supported by ERC project FAnFArE no. 637510. The authors are very grateful to Cristina Benea for many useful comments and discussions, and in particular for having shared with us a preprint of \cite{BeneaMuscalu_preprint}. The authors would also like to thank the  reviewers for their help with improving the presentation of the article. 
%
%
\section{Proof of Theorem \ref{mainthm}}\label{section:proof}
%
%
We can reduce the problem by linearization of the $\ell^r$ norm and duality to the following: given $f\in L^p, g \in L^q, h \in L^{s'}$ define the trilinear form
\begin{equation}\label{eqn:trilinear_form_1} 
\Lambda_r(f,g,h) := \int_{\R} \sum_{\omega\in\Omega} \pi_{\omega_1} f(x) \pi_{\omega_2} g(x) h_{\omega}(x) \ud x, 
\end{equation}
where $h_{\omega}(x) := h(x) \epsilon_{\omega}(x)$ and $\{\eps_\omega (x)\}_{\omega\in\Omega}$ satisfies $\|\{\epsilon_{\omega}(x)\}_{\omega}\|_{\ell^{r'}}\leq 1$ for every $x\in\R$; then it suffices to prove that 
\[ |\Lambda_r(f,g,h)| \lesssim_{p,q,r} \|f\|_{L^p} \|g\|_{L^q} \|h\|_{L^{s'}} \]
uniformly in $\{\eps_\omega (x)\}_{\omega\in\Omega}$. 
%
%
Thus we can further reduce the problem to that of bounding the trilinear form
\begin{equation}\label{eqn:trilinear_form_2}
 \Lambda(f,g,\mathbf{h}) := \int_{\R} \sum_{\omega\in\Omega} \pi_{\omega_1} f(x) \pi_{\omega_2} g(x) h_{\omega}(x) \ud x, 
\end{equation}
where $\mathbf{h} = \{h_\omega\}_{\omega\in\Omega}$ is a generic element of $L^{s'}(\ell^{r'})$.
\subsection{Discretization of the trilinear form}\label{section:discretization}
%
%
We perform a standard discretization procedure on the trilinear form $\Lambda$, except this time, contrary to one's expectations, we will not resolve the singularities using a Whitney decomposition. We have (using Radon duality, with $\ud\sigma$ the induced Lebesgue measure on the plane $\xi_1 + \xi_2 + \xi_3 = 0$)
\begin{align*}
\Lambda(f,g,\mathbf{h}) & = \int_{\R} \sum_{\omega\in\Omega} f \ast \widecheck{\mathds{1}_{\omega_1}}(x) g\ast \widecheck{\mathds{1}_{\omega_2}}(x) h_{\omega}(x) \ud x \\
& = \sum_{\omega\in\Omega} \int_{\xi_1 + \xi_2 + \xi_3 = 0} \widehat{f}(\xi_1) \mathds{1}_{\omega_1}(\xi_1) \widehat{g}(\xi_2)\mathds{1}_{\omega_1}(\xi_2) \widehat{h_{\omega}}(\xi_3) \ud \sigma(\xi_1,\xi_2,\xi_3) \\
& = \sum_{\omega\in\Omega} \int_{\xi_1 + \xi_2 + \xi_3 = 0} \widehat{f}(\xi_1) \mathds{1}_{\omega_1}(\xi_1) \widehat{g}(\xi_2)\mathds{1}_{\omega_1}(\xi_2) \widehat{h_{\omega}}(\xi_3) \chi_{\omega_3}(\xi_3) \ud \sigma(\xi_1,\xi_2,\xi_3) \\
& = \sum_{\omega\in\Omega} \int_{\R} f \ast \widecheck{\mathds{1}_{\omega_1}}(x) g\ast \widecheck{\mathds{1}_{\omega_2}}(x) h_{\omega} \ast \widecheck{\chi_{\omega_3}}(x) \ud x,
\end{align*}
where we have denoted $\omega_3 := 2(-\omega_1-\omega_2)$ and $\chi_{\omega_3}$ is a smoothed out characteristic function, identically equal to $1$ on $-\omega_1-\omega_2$ and identically vanishing outside $\omega_3$. Now, although the kernels decay very slowly, the functions $f\ast \widecheck{\mathds{1}_{\omega_j}}$ are morally still roughly constant in modulus at scale $|\omega_j|^{-1} =: |\omega|^{-1}$, and therefore it makes sense to do the following changes of variable:
\begin{align*}
\sum_{\omega\in\Omega} & \int_{\R} f \ast \widecheck{\mathds{1}_{\omega_1}}(x) g\ast \widecheck{\mathds{1}_{\omega_2}}(x) h_{\omega} \ast \widecheck{\chi_{\omega_3}}(x) \ud x \\
& = \sum_{\omega\in\Omega} |\omega|^{-1} \int_{\R} f \ast \widecheck{\mathds{1}_{\omega_1}}(|\omega|^{-1}y) g\ast \widecheck{\mathds{1}_{\omega_2}}(|\omega|^{-1}y) h_{\omega} \ast \widecheck{\chi_{\omega_3}}(|\omega|^{-1}y) \ud y \\
& = \sum_{\omega\in\Omega} \sum_{n \in \Z} |\omega|^{-1} \int_{0}^{1} f \ast \widecheck{\mathds{1}_{\omega_1}}(|\omega|^{-1}(n+z)) g\ast \widecheck{\mathds{1}_{\omega_2}}(|\omega|^{-1}(n+z)) \\
& \qquad \qquad\qquad \qquad \qquad \qquad \cdot h_{\omega} \ast \widecheck{\chi_{\omega_3}}(|\omega|^{-1}(n+z)) \ud z.
\end{align*}
In classical time-frequency analysis one rewrites the above form as an average over $z$ of discrete sums of coefficients, each given by an inner product against suitably defined wavepackets associated to tiles in the time-frequency plane, and then proceeds to bound the discrete sums uniformly in $z$; the approach we will take however is different and will involve allowing only a single scale for each square $\omega$, roughly speaking - a choice reflected in our definition of tri-tiles given below. This will allow us to do a time-frequency analysis of the trilinear form $\Lambda$ free from wavepackets (although wavepackets are intrinsically present in some strong results that we will use off-the-shelf).
%
%
%
Define then the tri-tiles as follows:
\begin{definition}
A tri-tile $P$ is a triple of sets of the form
\[ P = (P_1, P_2, P_3) = (I \times \omega_1, I \times \omega_2, I \times \omega_3) \]
where $\omega = \omega_1 \times \omega_2 \in \Omega$, $\omega_3 = 2(-\omega_1-\omega_2)$ as before and $I$ is a dyadic interval of length $|\omega|^{-1}$ . Sets $P_j$ for $j=1,2,3$ are referred to as tiles. Given a tri-tile $P$ we denote by $I_P$ the interval $I$ above; we also denote by $\omega(P) = \omega_1(P)\times \omega_2(P)$ the frequency square associated to the tri-tile $P$, and similarly we write $\omega_3(P)$ for $\omega_3$. Finally, given a collection of tiles $\P$ we denote by $\Omega(\P)$ the collection of frequency squares on which $\P$ is supported, namely 
\[ \Omega(\P) := \{ \omega \in \Omega \text{ s.t. } \omega = \omega(P) \text{ for some } P \in \P \}. \]
\end{definition}
Using H\"{o}lder's inequality on each summand above, we have
\begin{align*}
\Big||\omega|^{-1} \int_{0}^{1} f \ast \widecheck{\mathds{1}_{\omega_1}}(|\omega|^{-1}(n+z)) & g\ast \widecheck{\mathds{1}_{\omega_2}}(|\omega|^{-1}(n+z)) h_{\omega} \ast \widecheck{\chi_{\omega_3}}(|\omega|^{-1}(n+z)) \ud z\Big| \\
& \leq \|f \ast \widecheck{\mathds{1}_{\omega_1}}\|_{L^2(I_P)} \|g \ast \widecheck{\mathds{1}_{\omega_2}}\|_{L^2(I_P)} \|h_{\omega} \ast \widecheck{\chi_{\omega_3}}\|_{L^{\infty}(I_P)},
\end{align*}
where the tri-tile $P$ is given by
\[ I_P = [|\omega|^{-1}n, |\omega|^{-1}(n+1)] \]  
and 
\[ P = (I_P \times \omega_1, I_P \times \omega_2, I_P \times \omega_3). \]
Now, fix a parameter $r_0$ such that $2 < r_0 < r$. This will be fixed throughout the rest of the paper. Going back to writing $\pi_\omega f$ for $f \ast \widecheck{\mathds{1}_{\omega}}$, we notice that again by H\"{o}lder's inequality we have 
\[ \| \pi_{\omega_1(P)} f\|_{L^2(I_P)} \leq |I_P|^{1/2} \Big(\fint_{I_P}|\pi_{\omega_1(P)}f|^{r_0} \Big)^{1/r_0}, \]
and similarly for $g$. This may seem arbitrary at this point, but will be useful later in avoiding logarithmic-type losses in our estimates (see Remark \ref{remark_explanation_r_0}). We introduce the shorthand notation 
\begin{align*}
f(P_1) &:= \Big(\fint_{I_P}|\pi_{\omega_1(P)}f|^{r_0} \Big)^{1/r_0}, \\
g(P_2) &:= \Big(\fint_{I_P}|\pi_{\omega_2(P)}g|^{r_0} \Big)^{1/r_0},
\\
\mathbf{h}(P_3)& := \|h_{\omega(P)} \ast \widecheck{\chi_{\omega_3}}\|_{L^{\infty}(I_P)}.
\end{align*}
We have therefore shown that if $\P$ denotes the collection of all possible tri-tiles (obtained by letting $\omega$ range in $\Omega$ and $n \in \Z$, in the above notation), the trilinear form $\Lambda$ is bounded by the discretized sum
\[ |\Lambda(f,g,\mathbf{h})| \leq \Lambda_{\P}(f,g,\mathbf{h}) := \sum_{P \in \P} f(P_1) g(P_2) \mathbf{h}(P_3)|I_P|.  \]
The reason for this unusual choice of coefficients will become clear later in light of Lemma \ref{extra_term_control} and Proposition \ref{energy_1_2_control} below (see particularly Remarks \ref{remark_explanation_choice_of_coefficients} and \ref{remark_explanation_r_0}). In the rest of the section we will concentrate on bounding the discretized sum.
\subsection{Columns and rows} 
%
%
We introduce here some structured collections of tri-tiles, originating from \cite{BeneaBernicot}, that will be fundamental to our analysis of the trilinear form $\Lambda$. They are to be thought of as the analogue for our setup of trees, in the language of classical time-frequency analysis.
\begin{definition}
Let $j = 1$ or $2$. Given tri-tiles $P, P'$ we say that $P <_j P'$ if 
\[ \omega_j(P) \supseteq \omega_j(P') \]
and 
\[ I_P \subseteq I_{P'}. \]
We say that $P \leq_j P'$ if $P <_j P'$ or $P = P'$.
\end{definition}
\begin{definition}
A collection of tri-tiles $\mathcal{C}$ is a \emph{column} if there exists a tri-tile $T \in \mathcal{C}$, referred to as the top of $\mathcal{C}$, such that for every $P \in \mathcal{C}$ it holds that $P \leq_1 T$.\\
Analogously, a collection of tri-tiles $\mathcal{R}$ is a \emph{row} if there exists a tri-tile $T \in \mathcal{R}$, referred to as the top of $\mathcal{R}$, such that for every $P \in \mathcal{R}$ it holds that $P \leq_2 T$.
\end{definition}
Given a column or row $\mathcal{T}$ we will use $\mathrm{Top}(\mathcal{T})$ to denote its top.
%
%
\begin{remark} \label{remark_column_structure}
Observe that if $\mathcal{C}$ is a column then the collection of tiles $\{P_1 \text{ s.t. } P \in\mathcal{C}\}$ is overlapping, while the tiles $P_2$ for $P \in\mathcal{C}$ are all disjoint, and in particular if $\omega(P) \neq \omega(P')$ then $P_2$ and $P'_2$ are disjoint in frequency. The reverse holds for a row. This will be important later on.
\end{remark}
%
%
We show below that when $\mathcal{C}$ is a column we can give a good bound on $\Lambda_{\mathcal{C}}$ (and similarly for rows). In particular, we argue similarly to \cite{BeneaBernicot} and bound the discretized sum restricted to $\mathcal{C}$ as follows:
\begin{align*}
\sum_{P \in \mathcal{C}}f(P_1) g(P_2) \mathbf{h}(P_3)|I_P| \leq & \big(\sup_{P\in \mathcal{C}}f(P_1)\big) \sum_{P\in\mathcal{C}} g(P_2) \mathbf{h}(P_3)|I_P|\\
\leq & \big(\sup_{P\in \mathcal{C}}f(P_1)\big) \Big(\sum_{P \in \mathcal{C}} g(P_2)^r |I_P|\Big)^{1/r} \\
& \times \Big(\sum_{P \in\mathcal{C}} \mathbf{h}(P_3)^{r'} |I_P| \Big)^{1/{r'}}.
\end{align*}
Then, for the term in $g$ we bound 
\begin{align*}
\Big(\sum_{P \in \mathcal{C}} g(P_2)^r |I_P|\Big)^{1/r} & = \Big(\sum_{P \in \mathcal{C}} g(P_2)^{r - r_0} \cdot g(P_2)^{r_0} |I_P| \Big)^{1/r} \\
& \leq \big(\sup_{P \in \mathcal{C}} g(P_2)\big)^{(r-r_0)/r} \Big(\sum_{P \in \mathcal{C}} |I_P| g(P_2)^{r_0} \Big)^{1/r} \\
& = \big(\sup_{P \in \mathcal{C}} g(P_2)\big)^{(r-r_0)/r} \Big(\sum_{P \in \mathcal{C}} \int_{I_P} |\pi_{\omega_2(P)} g|^{r_0} \Big)^{1/r}
\end{align*}
(notice we have introduced the same type of quantity that controls the contribution of $f$ in here). As for the term in $\mathbf{h}$, we observe that 
\begin{align*}
|I_P| \mathbf{h}(P_3)^{r'} & = |I_P| \sup_{y \in I_P} |h_{\omega}\ast \widecheck{\chi_{\omega_3}}(y)|^{r'} \leq  |I_P|\big(\sup_{y \in I_P} \int |h_{\omega}(z)| |\widecheck{\chi_{\omega_3}}(y-z)| \ud z  \big)^{r'} \\
& \lesssim |I_P| \Big(\sup_{y \in I_P} \int |h_{\omega}(z)| \big(1 + \frac{|y-z|}{|I_P|}\big)^{-M} \frac{\ud z}{|I_P|}  \Big)^{r'} \\
& \leq |I_P| \big(\int |h_{\omega}(z)| \sup_{y \in I_P} \big(1 + \frac{|y-z|}{|I_P|}\big)^{-M} \frac{\ud z}{|I_P|}  \big)^{r'} \\
& \lesssim \int |h_{\omega}(z)|^{r'} \Phi_{I_P}(z) \ud z, 
\end{align*}
where $M>0$ is a large number and $\Phi_{I}$ denotes some rapidly decaying function concentrated in the interval $I$. Now observe that for each fixed $\omega$ the tiles $P$ which have $\omega$ as their frequency support have space support of fixed size $|I_P| = |\omega|^{-1}$, hence the intervals $I_P$ are all disjoint. Define then for an interval $I$ of length greater or equal to $|\omega|^{-1}$ the function
\[ \Phi^\omega_I (x) := \sum_{\substack{J \text{ dyadic s.t.} J\subseteq I, \\|J||\omega|=1}}\Phi_J(x); \]
notice $\Phi_{I}^{\omega}$ is essentially $\sim 1$ inside $I$ and decays like $(1 + |\omega|\mathrm{dist}(I,x))^{-M+1}$ outside of it (see remark \ref{remark:motivation_for_Phi_I_omega} for why we need to introduce such functions). We can thus bound
\begin{align*}
\sum_{P \in\mathcal{C}} \mathbf{h}(P_3)^{r'} |I_P| & \lesssim \sum_{P \in \mathcal{C}} \int |h_{\omega(P)}(z)|^{r'}\Phi_{I_P}(z)\ud z \\
& \lesssim \sum_{\omega} \int |h_{\omega}|^{r'} \Phi_{I_{\mathcal{C}}}^{\omega} \ud z.
\end{align*}
%
%
To summarize, we introduce sizes:
\begin{definition}[Sizes]
For any collection of tri-tiles $\P$ define 
\begin{align*}
\size^1_f(\P)& := \sup_{P \in\P} f(P_1) = \sup_{P \in\P} \Big(\fint_{I_P} |\pi_{\omega_1(P)} f|^{r_0} \ud x\Big)^{1/r_0}, \\
\size^2_g(\P)& := \sup_{P \in\P} g(P_2) = \sup_{P \in\P} \Big(\fint_{I_P} |\pi_{\omega_2(P)} g|^{r_0} \ud x\Big)^{1/{r_0}}, \\
\size^3_{\mathbf{h}}(\P)& := \sup_{\mathcal{T} \subset \P} \Big( \frac{1}{|I_{\mathcal{T}}|} \sum_{\omega \in \mathcal{T}} \int |h_{\omega}|^{r'} \Phi_{I_{\mathcal{T}}}^{\omega} \ud z \Big)^{1/{r'}}, \\
\end{align*}
where the last supremum is taken over sub-collections $\mathcal{T}$ of $\P$ which are either rows or columns. 
\end{definition}
With this notation, what has been shown in this section can be summarized as 
\begin{proposition}\label{column_row_estimate}
Let $\mathcal{C}$ be a column of tri-tiles, then
\[ |\Lambda_{\mathcal{C}}(f,g,\mathbf{h})| \lesssim \Big(\frac{1}{|I_{\mathcal{C}}|}\sum_{P \in \mathcal{C}} \int_{I_P} |\pi_{\omega_2(P)} g|^{r_0} \Big)^{1/r} \size^1_f(\mathcal{C}) \big[\size^2_g(\mathcal{C})\big]^{(r-r_0)/r} \size^3_{\mathbf{h}}(\mathcal{C})  |I_\mathcal{C}|, \]
and similarly, if $\mathcal{R}$ is a row of tri-tiles, we have 
\[ |\Lambda_{\mathcal{R}}(f,g,\mathbf{h})| \lesssim  \Big(\frac{1}{|I_{\mathcal{C}}|}\sum_{P \in \mathcal{R}} \int_{I_P} |\pi_{\omega_1(P)} f|^{r_0} \Big)^{1/r} \big[\size^1_f(\mathcal{R})\big]^{(r-r_0)/r} \size^2_g(\mathcal{R}) \size^3_{\mathbf{h}}(\mathcal{R}) |I_\mathcal{R}|. \]
\end{proposition}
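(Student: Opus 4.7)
The plan is to assemble into a clean chain the estimates already derived in the paragraphs preceding the statement. Treating the column case (the row case being entirely symmetric), I would start by bounding $f(P_1)$ uniformly by $\size^1_f(\mathcal{C})$ and pulling this single constant outside the sum over $P \in \mathcal{C}$. To the remaining discrete sum $\sum_{P \in \mathcal{C}} g(P_2)\mathbf{h}(P_3)|I_P|$ I would apply H\"older's inequality in the counting measure weighted by $|I_P|$, with exponents $r$ and $r'$, obtaining the product
\[
\Big(\sum_{P \in \mathcal{C}} g(P_2)^r |I_P|\Big)^{1/r}\Big(\sum_{P \in \mathcal{C}} \mathbf{h}(P_3)^{r'} |I_P|\Big)^{1/r'}.
\]

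For the $g$-factor I would split the summand as $g(P_2)^{r-r_0}\cdot g(P_2)^{r_0}$, bound the first piece by $[\size^2_g(\mathcal{C})]^{r-r_0}$, and recognize $g(P_2)^{r_0}|I_P| = \int_{I_P} |\pi_{\omega_2(P)} g|^{r_0}\,dx$ by the very definition of the coefficient $g(P_2)$; extracting the required factor of $|I_\mathcal{C}|^{1/r}$ by multiplying and dividing inside the root produces precisely the first factor on the right-hand side of the claimed estimate together with the $[\size^2_g(\mathcal{C})]^{(r-r_0)/r}$ contribution. For the $\mathbf{h}$-factor I would use the standard kernel decay estimate $|\widecheck{\chi_{\omega_3}}(y-z)| \lesssim |I_P|^{-1}(1+|y-z|/|I_P|)^{-M}$, followed by Jensen's inequality on the resulting integral against the probability density $|I_P|^{-1}\Phi_{I_P}$, to obtain
\[
|I_P|\,\mathbf{h}(P_3)^{r'} \lesssim \int |h_{\omega(P)}(z)|^{r'}\, \Phi_{I_P}(z)\, dz.
\]

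The key observation (essentially Remark \ref{remark_column_structure}) is then that for each fixed $\omega \in \Omega(\mathcal{C})$ the intervals $\{I_P : P \in \mathcal{C},\ \omega(P)=\omega\}$ are disjoint dyadic intervals of common length $|\omega|^{-1}$, all sitting inside $I_{\mathcal{C}}$; summing the rapidly decaying bumps $\Phi_{I_P}$ over such $P$ reconstitutes precisely the function $\Phi_{I_{\mathcal{C}}}^{\omega}$. Regrouping the $\omega$-sum and multiplying and dividing by $|I_{\mathcal{C}}|^{1/r'}$ produces, by the definition of $\size^3_{\mathbf{h}}$, the factor $\size^3_{\mathbf{h}}(\mathcal{C})\,|I_{\mathcal{C}}|^{1/r'}$. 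Combining the two Hölder factors with the $|I_\mathcal{C}|^{1/r}\cdot |I_\mathcal{C}|^{1/r'} = |I_{\mathcal{C}}|$ and the previously extracted $\size^1_f(\mathcal{C})$ yields exactly the claimed bound.

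The row case proceeds by the same scheme after interchanging the roles of $f$ and $g$: one pulls out $\size^2_g(\mathcal{R})$ as the uniform bound on $g(P_2)$ (valid since $\mathcal{R}$ is a row), applies H\"older to the remaining discrete sum, and splits the $f$-factor using $f(P_1)^{r-r_0}\cdot f(P_1)^{r_0}$ in the same manner. The only step requiring genuine care is the collapsing of $\sum_{P} \Phi_{I_P}$ into $\Phi_{I_\mathcal{C}}^{\omega}$ for each fixed frequency square $\omega$; this is routine once the disjointness and nesting properties furnished by the column/row structure are in hand, so I do not expect any substantial obstacle in the argument.
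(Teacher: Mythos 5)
Your proposal is correct and follows essentially the same route as the paper: pull out $\size^1_f(\mathcal{C})$, apply H\"older with exponents $r,r'$ to the remaining weighted sum, split $g(P_2)^r = g(P_2)^{r-r_0}g(P_2)^{r_0}$, and control the $\mathbf{h}$-factor via kernel decay, Jensen, and the disjointness (for fixed $\omega$) of the intervals $I_P$ inside $I_{\mathcal C}$. One small imprecision: summing $\Phi_{I_P}$ over $\{P\in\mathcal{C}:\omega(P)=\omega\}$ does not in general \emph{equal} $\Phi^{\omega}_{I_{\mathcal C}}$ (the column may omit some dyadic sub-intervals of the required length), but it is bounded above by it, which is all the argument needs.
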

\subsection{Size bounds}
%
%
We have the following immediate bounds for the sizes introduced above:
%
%
\begin{proposition}\label{size_1_2_control}
Let $\P$ be a collection of tri-tiles, then 
\[ \size^1_f(\P) \lesssim \sup_{P \in \P} \Big(\fint_{I_P} |\mathscr{C}f|^{r_0} \ud x \Big)^{1/r_0}, \]
where $\mathscr{C}$ is the Carleson operator. The analogous inequality holds for $\size^2_g$ as well.
\end{proposition}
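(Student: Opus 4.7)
The plan is very short: the claim is essentially a pointwise domination followed by averaging. First I would recall the well-known pointwise bound
\[ |\pi_{I} f(x)| \lesssim \mathscr{C}f(x) \]
valid for every interval $I \subseteq \R$ and a.e.\ $x$, with an absolute implicit constant. This is immediate from the definition of the Carleson operator as $\mathscr{C}f(x) = \sup_{N}\bigl|\int_{-\infty}^{N}\widehat{f}(\xi) e^{2\pi i \xi x}\ud\xi\bigr|$ together with the trivial identity $\pi_{[a,b]} f = \pi_{(-\infty,b]} f - \pi_{(-\infty,a]} f$ (or, alternatively, from the equivalent definition of $\mathscr{C}$ as the supremum of $|\pi_{[\xi_1,\xi_2]}f(x)|$ over all pairs $\xi_1 < \xi_2$).

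Given this pointwise domination, for any tri-tile $P \in \P$, applying it with $I = \omega_1(P)$ and integrating the $r_0$-th power over $I_P$ gives
\[ \Big(\fint_{I_P} |\pi_{\omega_1(P)} f|^{r_0} \ud x\Big)^{1/r_0} \lesssim \Big(\fint_{I_P} |\mathscr{C}f|^{r_0} \ud x\Big)^{1/r_0}, \]
and taking the supremum over $P \in \P$ on both sides yields exactly the asserted bound for $\size^1_f(\P)$. The argument for $\size^2_g$ is identical, with $\omega_1(P)$ replaced by $\omega_2(P)$.

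There is no real obstacle here; the proposition is only a reformulation of the fact that the Carleson maximal operator dominates every individual frequency truncation pointwise. The content of the statement is simply to record the estimate in a form that will be convenient when combining it, later on, with boundedness of $\mathscr{C}$ on $L^{r_0}$ (for $r_0 > 1$) to pass from size bounds to integrated estimates on $f$.
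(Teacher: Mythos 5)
Your argument is exactly what the paper has in mind (it simply writes ``Obvious'' for the proof): the pointwise bound $|\pi_{I} f| \lesssim \mathscr{C}f$ for any interval $I$, followed by taking $L^{r_0}$ averages over $I_P$ and the supremum over $P$. Correct and same approach.
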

\begin{proof}
Obvious.
\end{proof}
We do not state an analogous proposition for $\size^3$ since this size is already in a convenient form.\\
Later on we will also need the following simple bound in terms of $\VarC{r_0}$, the Variational Carleson operator as defined in Example \ref{ex:grid}.
%
%
\begin{lemma}\label{extra_term_control}
Let $\mathcal{C}$ be a column of tri-tiles. Then
\[ \frac{1}{|I_{\mathcal{C}}|}\sum_{P \in \mathcal{C}} \int_{I_P} |\pi_{\omega_2(P)} g|^{r_0} \ud x \leq \fint_{I_{\mathcal{C}}}|\VarC{r_0}g|^{r_0} \ud x. \]
\end{lemma}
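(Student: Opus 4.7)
The plan is to reduce the left-hand side to a pointwise sum of $|\pi_{\omega_2} g(x)|^{r_0}$ over pairwise disjoint intervals $\omega_2$, and then recognize this as being controlled by $\VarC{r_0} g(x)^{r_0}$ by the very definition of the Variational Carleson operator.

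First I would swap the sum and integral via Fubini. Writing $I_{\mathcal{C}} := I_{\mathrm{Top}(\mathcal{C})}$ and using that $I_P \subseteq I_{\mathcal{C}}$ for all $P \in \mathcal{C}$,
\[
\sum_{P \in \mathcal{C}} \int_{I_P} |\pi_{\omega_2(P)} g(x)|^{r_0}\, \ud x \;=\; \int_{I_{\mathcal{C}}} \sum_{\substack{P \in \mathcal{C} \\ x \in I_P}} |\pi_{\omega_2(P)} g(x)|^{r_0}\, \ud x.
\]
Now for each fixed $\omega \in \Omega(\mathcal{C})$, all tri-tiles $P \in \mathcal{C}$ with $\omega(P) = \omega$ have $I_P$ dyadic of the common length $|\omega|^{-1}$, hence these spatial intervals are pairwise disjoint. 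So for each $x$ at most one such $P$ contributes to the inner sum, and therefore
\[
\sum_{\substack{P \in \mathcal{C} \\ x \in I_P}} |\pi_{\omega_2(P)} g(x)|^{r_0} \;\leq\; \sum_{\omega \in \Omega(\mathcal{C})} |\pi_{\omega_2} g(x)|^{r_0}.
\]

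Next I would invoke Remark \ref{remark_column_structure}: in the column $\mathcal{C}$, the intervals $\omega_2$ for distinct $\omega \in \Omega(\mathcal{C})$ are pairwise disjoint. Enumerating them in increasing order as $[a_1, b_1], [a_2, b_2], \ldots$ with $b_k \leq a_{k+1}$, I can form the increasing sequence of endpoints $\xi_1 = a_1 < \xi_2 = b_1 \leq \xi_3 = a_2 < \xi_4 = b_2 \leq \cdots$ (skipping coincidences if $b_k = a_{k+1}$); then each $\omega_2$ appears as one of the consecutive intervals $[\xi_j, \xi_{j+1}]$, so
\[
\sum_{\omega \in \Omega(\mathcal{C})} |\pi_{\omega_2} g(x)|^{r_0} \;\leq\; \sum_{j} |\pi_{[\xi_j, \xi_{j+1}]} g(x)|^{r_0} \;\leq\; |\VarC{r_0} g(x)|^{r_0},
\]
by definition of the variational Carleson operator (reducing to finite subcollections if $\Omega(\mathcal{C})$ is infinite and then taking the limit). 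Integrating over $I_{\mathcal{C}}$ and dividing by $|I_{\mathcal{C}}|$ yields the claim.

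The only step that requires a moment's thought is extracting the pairwise disjointness of the $\omega_2$'s from the column structure, but this is precisely what Remark \ref{remark_column_structure} records, so no genuine obstacle remains; the lemma is really an observation that the column geometry is tailor-made for the variational Carleson bound.
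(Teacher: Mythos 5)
Your proof is correct and follows essentially the same route as the paper: swap the sum and integral, exploit the column structure (via Remark \ref{remark_column_structure}) to see that at each point $x$ the contributing projections come from pairwise-disjoint frequency intervals $\omega_2$, and then recognize the resulting sum as one of the competitors in the supremum defining $\VarC{r_0}$. The paper's proof is terser (it simply cites the remark and the definition of column), but the content is identical; your version usefully makes explicit both the single-scale disjointness of the $I_P$'s for fixed $\omega$ and the insertion of the disjoint $\omega_2$'s into an increasing partition, which is exactly what makes the variational Carleson bound apply.
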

Clearly, an analogous statement holds for rows.
\begin{proof}
If we rewrite the expression on the left hand side as 
\[ \fint_{I_{\mathcal{C}}} \sum_{P\in\mathcal{C}}|\pi_{\omega_2(P)}g(x)|^{r_0} \mathds{1}_{I_P}(x) \ud x, \]
then it follows from the definition of column (see Remark \ref{remark_column_structure}) that we can bound pointwise
\[ \Big(\sum_{P\in\mathcal{C}}|\pi_{\omega_2(P)}g(x)|^{r_0} \mathds{1}_{I_P}(x)\Big)^{r_0} \leq \VarC{r_0} g, \]
and the lemma follows.
\end{proof}
\subsection{Energies and energy estimates}
%
%
In this subsection we introduce the energies that will allow us to run a time-frequency argument for the trilinear form $\Lambda$. We prelude a definition of disjointness (taken from \cite{BeneaBernicot}) for collections of columns and collections of rows that is needed to state the definition of energies.
\begin{definition}
Given a collection $\mathfrak{C}$ of columns, we say that the columns in $\mathfrak{C}$ are \emph{mutually disjoint} if they are disjoint as sets of tri-tiles and if the sets $\mathsf{Top}(\mathcal{C})_1$ are disjoint (in the time-frequency plane) as $\mathcal{C}$ ranges over $\mathfrak{C}$. Equivalently, the tops are not comparable under $<_1$.\\
Analogously, given a collection $\mathfrak{R}$ of rows, we say that the rows in $\mathfrak{R}$ are mutually disjoint if they are disjoint as sets of tri-tiles and if the tiles $\mathsf{Top}(\mathcal{R})_2$ are disjoint (in the time-frequency plane) as $\mathcal{R}$ ranges over $\mathfrak{R}$. Equivalently, the tops are not comparable under $<_2$.
\end{definition}
Now we can state the definition of Energies.
\begin{definition}[Energies]
We denote
\[ \energy^1_f(\P) := \sup_{n \in \Z} \sup_{\mathfrak{C}} 2^n \Big(\sum_{\mathcal{C} \in \mathfrak{C}}  |I_{\mathsf{Top}(\mathcal{C})}|\Big)^{1/r_0},  \]
where the inner supremum runs over the collections $\mathfrak{C}$ of mutually disjoint columns in $\P$ such that for any column $\mathcal{C} \in \mathfrak{C}$ and for any $P \in \mathcal{C}$ it is 
\[ f(P_1) \geq 2^n. \]
Define analogously $\energy^2_g(\P)$ with respect to rows of tri-tiles in the obvious way.\\
Finally, we denote 
\[ \energy^3_{\mathbf{h}}(\P) := \sup_{n \in \Z} \sup_{\mathfrak{T}} 2^n \Big(\sum_{\mathcal{T} \in \mathfrak{T}}  |I_{\mathsf{Top}(\mathcal{T})}|\Big)^{1/{r'}}, \]
where the inner supremum runs over the collections $\mathfrak{T}$ of mutually disjoint rows and columns in $\P$ such that for every $\mathcal{T} \in \mathfrak{T}$ it is
\[ \Big(\frac{1}{|I_{\mathsf{Top}(\mathcal{T})}|} \int \sum_{\omega \in \mathcal{T}} |h_{\omega}|^{r'} \Phi_{I_{\mathsf{Top}(\mathcal{T})}}^{\omega} \ud x \Big)^{1/{r'}} \geq 2^n. \]
\end{definition}
\begin{remark}
Notice that we are using $L^{r_0}$-type energies for $f$ and $g$, instead of $L^2$-type energies as in \cite{BeneaBernicot}. However, this is not the only difference. Even if one were to let $r_0 = 2$ above, our definition of $\energy^{1}$ would still be slightly different from the corresponding one of \cite{BeneaBernicot} (in particular it's somewhat relaxed) because in our arguments we won't have to resort to Bessel-type inequalities.
\end{remark}
We must show that these quantities are well-behaved in order for the machinery of time-frequency analysis to work. In particular, we ought to show that the energies can be controlled in terms of $L^p$ norms of the functions. This is what we do next. First of all, we have the simple
%
%
\begin{proposition}\label{energy_3_control}
For any collection of tri-tiles $\P$ and for any $\mathbf{h} \in L^{r'}(\ell^{r'})$ we have 
\[ \energy^3_{\mathbf{h}}(\P) \lesssim \|\mathbf{h}\|_{L^{r'}(\ell^{r'})}. \]
\end{proposition}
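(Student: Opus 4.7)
The plan is to prove, for each fixed $n \in \Z$ and each mutually disjoint collection $\mathfrak{T}$ of rows and columns satisfying the threshold condition in the definition of $\energy^3_\mathbf{h}(\P)$, the inequality
\[ 2^{nr'}\sum_{\mathcal{T}\in\mathfrak{T}} |I_{\mathsf{Top}(\mathcal{T})}| \lesssim \|\mathbf{h}\|^{r'}_{L^{r'}(\ell^{r'})}, \]
from which the desired estimate follows by taking $r'$-th roots and then the supremum over $n$ and $\mathfrak{T}$. The starting point is the standard summation move: raise the defining inequality of $\mathfrak{T}$ to the $r'$-th power, multiply through by $|I_{\mathsf{Top}(\mathcal{T})}|$, sum over $\mathcal{T}$, and exchange the order of summation. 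This yields
\[ 2^{nr'}\sum_{\mathcal{T}} |I_{\mathsf{Top}(\mathcal{T})}| \leq \int \sum_{\omega \in \Omega} |h_\omega(y)|^{r'}\, W_\omega(y)\, \ud y, \]
where $W_\omega(y) := \sum_{\mathcal{T}\in\mathfrak{T}:\, \omega\in\Omega(\mathcal{T})} \Phi^\omega_{I_{\mathsf{Top}(\mathcal{T})}}(y)$ is an overlap function; the problem then reduces to controlling this weighted integral by a constant multiple of $\|\mathbf{h}\|^{r'}_{L^{r'}(\ell^{r'})}$.

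To handle the weighted integral we will exploit the constraints imposed by mutual disjointness. For each column $\mathcal{C}\in\mathfrak{T}$ with $\omega\in\Omega(\mathcal{C})$, the relation $P \leq_1 \mathsf{Top}(\mathcal{C})$ (for some $P\in\mathcal{C}$ with $\omega(P) = \omega$) forces $\omega_1(\mathsf{Top}(\mathcal{C}))\subseteq \omega_1$; the mutual disjointness of $\mathfrak{T}$ then implies that the tops' 1-tiles $I_{\mathsf{Top}(\mathcal{C})}\times \omega_1(\mathsf{Top}(\mathcal{C}))$ are pairwise disjoint as subsets of $\R\times\omega_1$, with an analogous picture for rows and 2-tiles in $\R\times\omega_2$. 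As a consequence, for each fixed $y\in\R$, the frequency projections of those tops whose spatial interval contains $y$ form a family of pairwise disjoint subintervals of $\omega_1$ (or $\omega_2$) with total length $\lesssim |\omega|$ -- a Carleson-type packing condition. Combined with the localization of $\Phi^\omega_I$ on $I$ and its decay at scale $|\omega|^{-1}$ outside, this is the ingredient that should allow us to control the weighted integral by the $L^{r'}(\ell^{r'})$ mass of $\mathbf{h}$.

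The main obstacle is that the top spatial intervals $I_{\mathsf{Top}(\mathcal{T})}$ may be arbitrarily nested around a common point $y$, as long as the corresponding frequency projections stay disjoint; thus a naive pointwise bound $W_\omega(y)\lesssim 1$ is not available, and one cannot simply control the weighted integral term by term. To circumvent this, the plan is to analyze the sum scale by scale in the sizes of the tops: at each fixed dyadic scale of $|I_{\mathsf{Top}(\mathcal{T})}|$ the spatial intervals are essentially disjoint, so their contributions to $W_\omega$ add up to $O(1)$ via the $|\omega|^{-1}$-decay of $\Phi^\omega_I$, and the sum over scales is in turn controlled by the Carleson packing of the frequency projections. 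Successfully matching these two packings -- one spatial at each scale, one in frequency across scales -- is the crux of the argument.
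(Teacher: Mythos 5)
The reduction to a weighted integral $\int \sum_\omega |h_\omega|^{r'} W_\omega$, where $W_\omega$ is the overlap function for the functions $\Phi^\omega_{I_{\mathsf{Top}(\mathcal{T})}}$, is exactly the paper's starting point. However, at that stage you declare that the pointwise bound $W_\omega \lesssim 1$ is "not available" because the top intervals could be arbitrarily nested, and you then reach for a scale-by-scale argument coupled with frequency packing. This is where you diverge from the paper, and the divergence stems from a missed observation: the supremum in the definition of $\energy^3_{\mathbf{h}}$ can be realized (or approximated) by a collection $\mathfrak{T}$ of \emph{maximal} rows and columns, and for such a collection the pointwise bound \emph{does} hold. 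The point, explained in Remark \ref{remark:motivation_for_Phi_I_omega}, is that $\size^3$ (and hence $\energy^3$) is sensitive only to the data $(I_{\mathsf{Top}(\mathcal{T})}, \Omega(\mathcal{T}))$, so each $\mathcal{T}$ may be enlarged to contain every $P\in\P$ with $\omega(P)\in\Omega(\mathcal{T})$ and $I_P\subseteq I_{\mathsf{Top}(\mathcal{T})}$ without affecting the threshold. Once the $\mathcal{T}$'s are maximal in this sense, two of them that both contain a given $\omega$ cannot have nested top intervals: if $I_{\mathsf{Top}(\mathcal{T}_1)}\subsetneq I_{\mathsf{Top}(\mathcal{T}_2)}$, then the tri-tile $P_1\in\mathcal{T}_1$ with $\omega(P_1)=\omega$ would also satisfy $I_{P_1}\subseteq I_{\mathsf{Top}(\mathcal{T}_2)}$ and $\omega\in\Omega(\mathcal{T}_2)$, so by maximality $P_1\in\mathcal{T}_2$, contradicting disjointness of the two as tri-tile sets. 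So for each fixed $\omega$ the top intervals are genuinely disjoint, and the rapid decay of $\Phi^\omega_I$ at scale $|\omega|^{-1}$ then yields $W_\omega\lesssim 1$ directly. This is exactly why the functions $\Phi^\omega_I$ (decaying at scale $|\omega|^{-1}$ rather than $|I|$) were introduced.

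Your alternative route, as sketched, would need the maximality observation anyway. At a fixed dyadic scale the spatial top intervals are not guaranteed to be disjoint: two tops at the same scale, both with $\omega\in\Omega(\mathcal{T})$, may share the \emph{same} spatial interval as long as their frequency intervals $\omega_1(\mathsf{Top}(\mathcal{T}))$ (both subintervals of $\omega_1$) are disjoint, so the per-scale contribution to $W_\omega(y)$ is not $O(1)$ without further input. The Carleson-type packing you invoke in frequency measures lengths of the top frequency intervals, but the weight $\Phi^\omega_I$ is normalized to $\sim 1$ on $I$ regardless of frequency, so it is not clear how that packing closes the estimate on its own. I would recommend abandoning the scale-by-scale plan and using the maximality reduction instead: it is shorter and it is precisely the mechanism the definition of $\size^3$ was engineered to exploit.
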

\begin{proof}
We may assume for simplicity that the collection $\P$ is finite, since our argument will not depend on its cardinality. Let $n \in \Z$ and $\mathfrak{T}$ be a collection of disjoint maximal rows and columns that realize the supremum in the definition of $\energy^3$, that is 
\[ \energy^3_{\mathbf{h}}(\P)^{r'} = 2^{r' n} \sum_{\mathcal{T} \in \mathfrak{T}} |I_{\mathsf{Top}(\mathcal{T})}|.  \] 
By definition we then have 
\begin{align*}
 \energy^3_{\mathbf{h}}(\P)^{r'} & \lesssim \sum_{\mathcal{T} \in \mathfrak{T}} \sum_{\omega \in \mathcal{T}} \int |h_\omega|^{r'} \Phi_{I_{\mathsf{Top}(\mathcal{T})}}^{\omega} \ud x \\ 
 & = \sum_{\omega \in \Omega} \int |h_\omega|^{r'}  \sum_{\substack{ \mathcal{T} \in \mathfrak{T} :\\ \mathcal{T} \ni \omega }}\Phi_{I_{\mathsf{Top}(\mathcal{T})}}^{\omega} \ud x;
 \end{align*}
but the collection $\mathfrak{T}$ is maximal with respect to inclusion and therefore for a fixed $\omega$ the intervals $I_{\mathsf{Top}(\mathcal{T})}$ are disjoint, hence by the rapid decay of the functions $\Phi^{\omega}$ we have 
\[ \sum_{\substack{ \mathcal{T} \in \mathfrak{T} :\\ \mathcal{T} \ni \omega }}\Phi_{I_{\mathsf{Top}(\mathcal{T})}}^{\omega} \lesssim 1, \]
and this concludes the proof.
\end{proof}
%
%
\begin{remark}\label{remark:motivation_for_Phi_I_omega}
Two things should be noticed. Firstly, that in the last lines of the above proof we crucially needed the decay of the functions $\Phi_I^{\omega}$ away from $I$ to be controlled by $|\omega|^{-1}$ rather than by the larger $|I|$, which justifies their introduction and the subsequent definition of $\size^3$.\\
Secondly, the maximality we appealed to above means the following: given a column (or row) $\mathcal{C}$ such that 
\[ \frac{1}{|I_{\mathrm{Top}(\mathcal{C})}|}\int \sum_{\omega \in \Omega(\mathcal{C})} |h_\omega|^{r'} \Phi^\omega_{I_{\mathrm{Top}(\mathcal{C})}} \ud x \geq 2^{{r'}n}, \]
we can enlarge $\mathcal{C}$ by adjoining all tri-tiles $P \in \P$ such that $ \omega(P) \in \Omega(\mathcal{C})$ and $I_P \subseteq I_{\mathrm{Top}(\mathcal{C})}$, and doing so will not change the left hand side of the inequality at all. In other words, the only information $\size^3_{\mathbf{h}}$ is sensitive to is the space support of columns or rows $\mathcal{T}$ and the squares $\omega \in \Omega(\mathcal{T})$.
\end{remark}
Next we look at a bound for $\energy^{j}$ for $j=1,2$. 
%
%
\begin{proposition}\label{energy_1_2_control}
For any collection of tri-tiles $\P$ and for any $f\in L^{r_0}$ we have that 
\[ \energy^j_f(\P) \lesssim \|f\|_{L^{r_0}} \qquad j=1,2. \]
\end{proposition}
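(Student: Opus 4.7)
The plan is to dominate the energy sum pointwise by the Variational Carleson operator $\VarC{r_0}$, thereby avoiding any Bessel-type inequality. Fix $j=1$ (the case $j=2$ is entirely symmetric, with rows and $\omega_2$ in place of columns and $\omega_1$), $n \in \Z$, and an admissible collection $\mathfrak{C}$ of mutually disjoint columns; for each $\mathcal{C} \in \mathfrak{C}$ write $T_\mathcal{C}$ for its top. The top $T_\mathcal{C}$ is itself a tri-tile of $\mathcal{C}$, hence satisfies $f((T_\mathcal{C})_1) \geq 2^n$, which unwraps to
\[
2^{n r_0} |I_{T_\mathcal{C}}| \leq \int_{I_{T_\mathcal{C}}} |\pi_{\omega_1(T_\mathcal{C})} f|^{r_0} \,\ud x.
\]
Summing over $\mathcal{C}$ and interchanging sum and integral yields
\[
2^{n r_0} \sum_{\mathcal{C} \in \mathfrak{C}} |I_{T_\mathcal{C}}| \leq \int_\R \sum_{\mathcal{C} \in \mathfrak{C}:\, x \in I_{T_\mathcal{C}}} |\pi_{\omega_1(T_\mathcal{C})} f(x)|^{r_0} \,\ud x.
\]

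The core of the argument is the following observation: for any fixed $x \in \R$, the set of intervals $\{\omega_1(T_\mathcal{C}) : x \in I_{T_\mathcal{C}}\}$ is pairwise disjoint. Indeed, if $x \in I_{T_\mathcal{C}} \cap I_{T_{\mathcal{C}'}}$ then the tiles $(T_\mathcal{C})_1 = I_{T_\mathcal{C}} \times \omega_1(T_\mathcal{C})$ and $(T_{\mathcal{C}'})_1 = I_{T_{\mathcal{C}'}} \times \omega_1(T_{\mathcal{C}'})$ share the vertical line $\{x\} \times \R$, so the hypothesis that they are disjoint as rectangles in the time-frequency plane forces $\omega_1(T_\mathcal{C}) \cap \omega_1(T_{\mathcal{C}'}) = \emptyset$.

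Since any finite family of disjoint intervals on the line appears as the odd-indexed intervals of a strictly increasing partition $\xi_1 < \cdots < \xi_M$, we obtain the pointwise bound
\[
\sum_{\mathcal{C}:\, x \in I_{T_\mathcal{C}}} |\pi_{\omega_1(T_\mathcal{C})} f(x)|^{r_0} \leq [\VarC{r_0} f(x)]^{r_0}.
\]
Integrating in $x$ and invoking the $L^{r_0}$-boundedness of $\VarC{r_0}$ (valid for $r_0 > 2$ by \cite{OberlinSeegerTaoThieleWright}, as recalled in Example \ref{ex:grid}), one concludes
\[
2^{n r_0} \sum_{\mathcal{C} \in \mathfrak{C}} |I_{T_\mathcal{C}}| \lesssim \|f\|_{L^{r_0}}^{r_0};
\]
taking $r_0$-th roots and supping over $n$ and $\mathfrak{C}$ gives $\energy^1_f(\P) \lesssim \|f\|_{L^{r_0}}$. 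The main step I expect to require care is the extraction of the pointwise frequency disjointness from the tile-disjointness of the tops; once this is recognized, the rest of the proof is a direct appeal to the Variational Carleson estimate, and any infinite subcollections are handled by monotone approximation.
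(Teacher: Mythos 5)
Your proof is correct and follows essentially the same route as the paper: unwrap the energy condition on the tops to bound $2^{n r_0}\sum_{\mathcal{C}}|I_{T_\mathcal{C}}|$ by a sum of $L^{r_0}$ integrals, use the time-frequency disjointness of the tops to dominate the resulting integrand pointwise by $(\VarC{r_0}f)^{r_0}$, and invoke the $L^{r_0}$ bound for the Variational Carleson operator from \cite{OberlinSeegerTaoThieleWright}. The only additional content in your write-up is the (correct) explicit justification that disjointness of the top tiles forces frequency-disjointness at any fixed $x$, a step the paper compresses into a reference to Lemma \ref{extra_term_control}.
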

\begin{proof}
We let $j=1$ in here, the proof for $j=2$ being identical. Let $n \in \Z$ and $\mathfrak{C}$ be a collection of mutually disjoint columns that realize the supremum in the definition of $\energy^1_f(\P)$ within a factor of $2$, that is 
\[ \energy^1_f(\P)^{r_0} \sim 2^{r_0 n} \sum_{\mathcal{C} \in \mathfrak{C}} |I_{\mathsf{Top}(\mathcal{C})}|. \]
Then by definition of energy this implies 
\[ \energy^1_f(\P)^{r_0} \lesssim \sum_{\mathcal{C} \in \mathfrak{C}} \int_{I_{\mathcal{C}}} |\pi_{\omega_1(\mathcal{C})}f|^{r_0} \ud x, \]
where we have abused notation by writing $\mathcal{C}$ in place of $\mathsf{Top}(\mathcal{C})$ to ease readability. We rewrite the latter quantity as 
\[ \int \sum_{\mathcal{C} \in \mathfrak{C}} |\pi_{\omega_1(\mathcal{C})}f(x)|^{r_0} \mathds{1}_{I_{\mathcal{C}}}(x) \ud x. \]
Since by definition of $\energy^1$ the tiles $\mathsf{Top}(\mathcal{C})_1$ are disjoint in time-frequency, we can bound the sum pointwise (cf. the proof of Lemma \ref{extra_term_control}) in terms of the Variational Carleson operator, namely
\[ \sum_{\mathcal{C} \in \mathfrak{C}} |\pi_{\omega_1(\mathcal{C})}f(x)|^{r_0} \mathds{1}_{I_{\mathcal{C}}}(x) \leq \big(\VarC{r_0}f(x)\big)^{r_0}. \] 
We know from \cite{OberlinSeegerTaoThieleWright} that $\VarC{r_0}$ is $L^p \to L^p$ bounded for $p > r_0'$, and therefore (as $r_0 > 2$) our quantity above is bounded by 
\[ \int |\VarC{r_0}f|^{r_0} \ud x \lesssim \|f\|_{L^{r_0}}^{r_0}, \]
which finishes the proof.
\end{proof}
\begin{remark}\label{remark_explanation_choice_of_coefficients}
Lemma \ref{extra_term_control} and the above proposition are the reason for our choice of working with the coefficients $f(P_1),g(P_2), \mathbf{h}(P_3)$. Indeed, it is their form and precise localization (that is, the $L^{r_0}$ averages don't involve weights supported everywhere on $\R$) that allow us to introduce pointwise estimates of the relevant quantities in terms of Variational Carleson operators.
\end{remark}
\begin{remark}\label{remark_explanation_r_0}
The choice of introducing the parameter $r_0$ satisfying $2 < r_0 < r$ is motivated by Proposition \ref{energy_1_2_control} and Lemma \ref{extra_term_control}. Indeed, the problem here is that the Variational Carleson operator $\VarC{r_0}$ is only bounded when $r_0 > 2$. If we were to choose $r_0 = 2$ then, to argue by pointwise domination as above, we'd be forced to introduce a logarithmic-type loss (in the cardinality of $\Omega$) in our estimates, since the best one could say would then be that for any $\epsilon>0$
\[ \sum_{\mathcal{C} \in \mathfrak{C}} |\pi_{\omega_1(\mathcal{C})}f(x)|^{r_0} \mathds{1}_{I_{\mathcal{C}}}(x) \leq (\#\Omega)^{\epsilon} \big(\VarC{\frac{2}{1 - \epsilon}}f(x)\big)^2 \]
(by H\"{o}lder's inequality). The introduction of the parameter $r_0$ allows us to bypass this problem, at the cost of slightly reducing the range in which certain of our estimates hold (see Proposition \ref{prop:r=2}). This however won't be a problem in itself, in light of our use of interpolation in Section \S \ref{section:proof_of_main_theorem}.
\end{remark}
\subsection{Decomposition lemmas}
%
%
The decomposition lemma for $\size^1$ is well known and perhaps immediate. An identical result holds for $\size^2$ by replacing columns with rows.
%
%
\begin{lemma}[Decomposition lemma for $\mathrm{Size}^1$]\label{decomposition_lemma_size_1_2}
Let $\P$ be a collection of tiles and let $n$ be such that 
\[ \size^1_{f}(\P) \leq 2^{-n} \energy^1_{f}(\P). \]
Then we can decompose $\P = \P_{\mathrm{low}} \sqcup \P_{\mathrm{high}}$ such that 
\[ \size^1_{f}(\P_{\mathrm{low}}) \leq 2^{-n-1} \energy^1_{f}(\P) \]
and $\P_{\mathrm{high}}$ can be organized into a collection $\mathfrak{C}$ of mutually disjoint columns $\mathcal{C}$ such that 
\[ \sum_{\mathcal{C}\in\mathfrak{C}} |I_{\mathcal{C}}| \lesssim 2^{r_0 n}.  \]
\end{lemma}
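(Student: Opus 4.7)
The plan is to run a standard greedy selection procedure adapted to the partial order $<_1$ on tri-tiles, of the kind used throughout time-frequency analysis. Write $E := \energy^1_f(\P)$ and define $\P_{\mathrm{big}} := \{P \in \P : f(P_1) > 2^{-n-1} E\}$. Assuming without loss of generality that $\P$ is finite (the final bound will not depend on $\#\P$), I will inductively construct $\mathfrak{C}$ as follows: while $\P_{\mathrm{big}}$ is nonempty, pick some $T \in \P_{\mathrm{big}}$ that is maximal with respect to $<_1$, form the column
\[ \mathcal{C}_T := \Bigl\{ P \in \P \setminus \bigsqcup_{\mathcal{C}' \in \mathfrak{C}} \mathcal{C}' : P \leq_1 T \Bigr\}, \]
add $\mathcal{C}_T$ to $\mathfrak{C}$, and then delete every tri-tile of $\mathcal{C}_T$ from $\P_{\mathrm{big}}$. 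At termination, set $\P_{\mathrm{high}} := \bigsqcup_{\mathcal{C} \in \mathfrak{C}} \mathcal{C}$ and $\P_{\mathrm{low}} := \P \setminus \P_{\mathrm{high}}$.

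Next, I need to verify the three required properties. Each $\mathcal{C}_T$ is by construction a column with top $T$, and the $\mathcal{C}_T$'s are pairwise disjoint as sets of tri-tiles. The tops form an antichain under $<_1$: if $T$ is selected before $T'$, then $T' \not<_1 T$, for otherwise $T'$ would already have been swept into $\mathcal{C}_T$ and removed from $\P_{\mathrm{big}}$; and $T \not<_1 T'$, for otherwise $T$ would not have been maximal when chosen. This establishes the mutual disjointness of $\mathfrak{C}$. For the low-size bound: if some $P \in \P_{\mathrm{low}}$ satisfied $f(P_1) > 2^{-n-1} E$, then $P \in \P_{\mathrm{big}}$ from the start, so the algorithm would eventually either have selected $P$ as a top or would have absorbed it into the column of a top $T$ with $P \leq_1 T$; in either case $P \in \P_{\mathrm{high}}$, a contradiction, yielding $\size^1_f(\P_{\mathrm{low}}) \leq 2^{-n-1} E$.

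Finally, to obtain $\sum_{\mathcal{C} \in \mathfrak{C}} |I_{\mathcal{C}}| \lesssim 2^{r_0 n}$, I will feed the family of singleton columns $\{\{T\} : T = \mathsf{Top}(\mathcal{C}), \mathcal{C}\in\mathfrak{C}\}$ back into the definition of $\energy^1_f(\P)$. Every such $T$ satisfies $f(T_1) > 2^{-n-1} E$, and the singletons are mutually disjoint by the antichain property established above. Choosing $m \in \Z$ with $2^m \leq 2^{-n-1} E < 2^{m+1}$ and applying the energy definition gives
\[ E = \energy^1_f(\P) \geq 2^m \Big( \sum_{\mathcal{C} \in \mathfrak{C}} |I_{\mathsf{Top}(\mathcal{C})}| \Big)^{1/r_0} \geq 2^{-n-2} E \Big( \sum_{\mathcal{C} \in \mathfrak{C}} |I_{\mathsf{Top}(\mathcal{C})}| \Big)^{1/r_0}, \]
from which $\sum_{\mathcal{C}} |I_{\mathcal{C}}| \leq 2^{r_0(n+2)} \lesssim 2^{r_0 n}$. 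The only point that requires genuine care is the interpretation of maximality with respect to the nonstandard order $<_1$, which couples spatial containment $I_P \subseteq I_T$ with reversed frequency containment $\omega_1(P) \supseteq \omega_1(T)$; once this is unambiguous, the rest is routine bookkeeping, and the corresponding statement for $\size^2_g$ follows by exchanging columns with rows and $<_1$ with $<_2$.
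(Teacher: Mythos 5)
Your proof is correct and follows essentially the same greedy stopping-time selection as the paper's, with two harmless variations: you let each column $\mathcal{C}_T$ absorb low-size tri-tiles from all of $\P$ (which forces you to pass to singleton columns when invoking the energy definition, a valid workaround; the paper instead forms columns only inside the stock $\{P: f(P_1)>2^{-n-1}\energy^1_f(\P)\}$ and so can feed the full columns to the energy directly), and you pick an arbitrary $<_1$-maximal top rather than the paper's specific tie-broken choice. Neither change affects validity, since for dyadic tri-tiles non-comparability under $<_1$ is equivalent to disjointness of the tiles $P_1$, so the antichain-of-tops property holds regardless of how ties are resolved.
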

\begin{proof}
The proof is a simple stopping time argument. Let $\P_{\mathrm{stock}}$ be initialized to 
\[ \P_{\mathrm{stock}} := \{ P \in \P \text{ s.t. } f(P_1)> 2^{-n-1}\energy^1_{f}(\P) \}, \]
and let $\mathfrak{C}$ be initialized to $\mathfrak{C} := \emptyset$. We set right away 
\[ \P_{\mathrm{low}} := \P \backslash \P_{\mathrm{stock}}, \]
which will not be changed throughout the algorithm. The size property is then immediate from the definition of (the initial state of) $\P_{\mathrm{stock}}$. As for the organization of $\P_{\mathrm{high}} := \P \backslash \P_{\mathrm{low}}$ into columns, we proceed as follows. Of the tri-tiles in $\P_{\mathrm{stock}}$ that are maximal with respect to $<_1$, let $P_{\mathrm{max}}$ be the one such that $\inf I_{P_{\mathrm{max}}}$ is minimal and $\sup \omega_1(P_{\mathrm{max}})$ is maximal. Then we let $\mathcal{C}$ be the maximal (with respect to inclusion) column in $\P_{\mathrm{stock}}$ with top $P_{\mathrm{max}}$ and update $\mathfrak{C}$ to be $\mathfrak{C} \cup \{\mathcal{C}\}$ and update $\P_{\mathrm{stock}}$ to be $\P_{\mathrm{stock}}\backslash \bigcup_{P \in \mathcal{C}}\{P\}$. Repeat the process until $\P_{\mathrm{stock}}$ is empty and the algorithm stops. Then we see that by maximality the columns in $\mathfrak{C}$ are mutually disjoint, and as for the bound on $\sum_{\mathcal{C} \in \mathfrak{C}}|I_{\mathcal{C}}|$ notice that we have for each $\mathcal{C}$ and for each $P \in \mathcal{C}$
\[ f(P_1) > 2^{-n-1} \energy^1_{f}(\P)\]
and therefore just by definition of energy (and its monotonicity)
\[ 2^{-n-1} \energy^1_{f}(\P) \Big(\sum_{\mathcal{C}\in\mathfrak{C}} |I_{\mathcal{C}}|\Big)^{1/r_0} \lesssim \energy^1_{f}(\P_{\mathrm{high}}) \leq \energy^1_{f}(\P), \]
which proves the claim.  
\end{proof}
The decomposition lemma for $\mathrm{Size}^3$ is entirely similar (we have replaced the constant $2$ with $\gamma$ in view of its application to the proof of Lemma \ref{global_decomposition_lemma} below).
%
%
\begin{lemma}[Decomposition lemma for $\mathrm{Size}^3$]\label{decomposition_lemma_size_3}
Let $\gamma = 2^{r_0/{r'}}$. Let $\P$ be a collection of tiles and let $n$ be such that 
\[ \mathrm{Size}^3_{h}(\P) \leq \gamma^{-n} \mathrm{Energy}^3_{h}(\P). \]
Then we can decompose $\P = \P_{\mathrm{low}} \sqcup \P_{\mathrm{high}}$ such that 
\[ \mathrm{Size}^3_{h}(\P_{\mathrm{low}}) \leq \gamma^{-n-1} \mathrm{Energy}^3_{h}(\P) \]
and $\P_{\mathrm{high}}$ can be organized into a collection $\mathfrak{T}$ of mutually disjoint columns and rows $\mathcal{T}$ such that 
\[ \sum_{\mathcal{T} \in\mathfrak{T}} |I_{\mathcal{T}}| \lesssim \gamma^{r'n} = 2^{r_0 n}.  \]
\end{lemma}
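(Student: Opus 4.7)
The plan is to run the same greedy stopping-time procedure as in Lemma~\ref{decomposition_lemma_size_1_2}, but now operating directly at the level of rows and columns rather than individual tri-tiles, since $\size^3$ is by definition a supremum over such structured sub-collections. Initialize $\P_{\mathrm{stock}} := \P$ and $\mathfrak{T} := \emptyset$. While there exists a row or column $\mathcal{T} \subseteq \P_{\mathrm{stock}}$ satisfying
\[ \Big(\frac{1}{|I_{\mathsf{Top}(\mathcal{T})}|}\int \sum_{\omega \in \Omega(\mathcal{T})} |h_{\omega}|^{r'}\Phi^{\omega}_{I_{\mathsf{Top}(\mathcal{T})}}\,\ud x\Big)^{1/r'} > \gamma^{-n-1}\energy^3_{\mathbf{h}}(\P), \]
I select one whose top has maximal $|I_{\mathsf{Top}(\mathcal{T})}|$ (with a deterministic tiebreaker enforcing also maximal $\omega_j(\mathsf{Top})$ for the relevant $j$), and among all candidates with this maximal top I take $\mathcal{T}$ maximal under inclusion. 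By Remark~\ref{remark:motivation_for_Phi_I_omega} this last enlargement only increases the left-hand side above, so the threshold condition is preserved. I then add $\mathcal{T}$ to $\mathfrak{T}$, remove its tri-tiles from $\P_{\mathrm{stock}}$, and iterate. Since $\P$ may be assumed finite, the procedure terminates; set $\P_{\mathrm{low}} := \P_{\mathrm{stock}}$ and $\P_{\mathrm{high}} := \P \setminus \P_{\mathrm{low}}$. The size bound $\size^3_{\mathbf{h}}(\P_{\mathrm{low}}) \leq \gamma^{-n-1}\energy^3_{\mathbf{h}}(\P)$ is then immediate from the termination condition.

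Next I would verify that $\mathfrak{T}$ consists of mutually disjoint rows and columns. Disjointness as sets of tri-tiles is built into the procedure. For the top-incomparability condition within same-type pairs: suppose $\mathcal{C}_1,\mathcal{C}_2$ are columns in $\mathfrak{T}$ with $\mathcal{C}_1$ chosen first. If $\mathsf{Top}(\mathcal{C}_2) <_1 \mathsf{Top}(\mathcal{C}_1)$, then $\mathsf{Top}(\mathcal{C}_2)$ would satisfy $\mathsf{Top}(\mathcal{C}_2) \leq_1 \mathsf{Top}(\mathcal{C}_1)$ and hence would have been absorbed into $\mathcal{C}_1$ by its inclusion-maximal construction, contradicting the fact that $\mathsf{Top}(\mathcal{C}_2) \in \P_{\mathrm{stock}}$ at the later step. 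The reverse comparison $\mathsf{Top}(\mathcal{C}_1) <_1 \mathsf{Top}(\mathcal{C}_2)$ is excluded by the greedy maximization of $|I_{\mathsf{Top}}|$ and the deterministic tiebreaker, because $\mathcal{C}_2$ was already an admissible candidate at step $1$. The analogous argument handles same-type row pairs, which is all that mutual disjointness requires.

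Finally I would close the argument by using $\mathfrak{T}$ as a test collection in the defining supremum for $\energy^3_{\mathbf{h}}(\P_{\mathrm{high}})$. Since each $\mathcal{T} \in \mathfrak{T}$ satisfies the threshold inequality with parameter $\gamma^{-n-1}\energy^3_{\mathbf{h}}(\P)$, mutual disjointness gives
\[ \gamma^{-(n+1)}\energy^3_{\mathbf{h}}(\P)\Big(\sum_{\mathcal{T}\in\mathfrak{T}}|I_{\mathsf{Top}(\mathcal{T})}|\Big)^{1/r'} \leq \energy^3_{\mathbf{h}}(\P_{\mathrm{high}}) \leq \energy^3_{\mathbf{h}}(\P), \]
so that $\sum_{\mathcal{T}\in\mathfrak{T}} |I_{\mathcal{T}}| \lesssim \gamma^{(n+1)r'} \lesssim \gamma^{r'n} = 2^{r_0 n}$, as required. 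The only delicate step is the mutual-disjointness verification — in particular, the need to combine a maximal-under-inclusion choice (to use Remark~\ref{remark:motivation_for_Phi_I_omega}) with a greedy maximal $|I_{\mathsf{Top}}|$ choice (to handle the opposite comparison direction). Everything else is a direct transposition of the mechanism in Lemma~\ref{decomposition_lemma_size_1_2}, with the exponent $r_0$ replaced by $r'$ accounting for the factor $\gamma^{r'n}=2^{r_0 n}$.
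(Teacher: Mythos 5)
The paper omits its own proof of this lemma, pointing to Lemma~\ref{decomposition_lemma_size_1_2} as a template, and your stopping-time argument carried out directly at the level of rows and columns (greedy selection by maximal $|I_{\mathsf{Top}(\mathcal{T})}|$, then maximal under inclusion, with the energy bound obtained by testing $\mathfrak{T}$ in the definition of $\energy^3$) is precisely the natural transposition of that template to $\size^3$. The argument is correct; the only cosmetic issues are that the first inequality in your final chain should be a $\lesssim$ rather than a $\leq$ (since the $2^n$ in the definition of $\energy^3$ ranges over integer powers, costing an absolute constant), and that the tiebreaker is in fact redundant for mutual disjointness, because two same-type tops with equal $|I_{\mathsf{Top}}|$ that are $<_j$-comparable must share the same tile $j$ and hence the later one would already have been absorbed into the earlier maximal column or row.
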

The proof is quite similar to the one given above for $\mathrm{Size}^1$, and is thus omitted.\\
Finally, by applying the decomposition lemmas simultaneously and then iterating one can achieve a global decomposition of a given collection $\P$ with good control of the sizes of the sub-collections. In particular
%
%
\begin{lemma}[global decomposition]\label{global_decomposition_lemma}
Let $\P$ be a collection of tri-tiles. Then there exists a partition $\P = \bigsqcup_{n} (\P_n^{\mathrm{col}} \sqcup \P_n^{\mathrm{row}})$ with the properties:
\begin{enumerate}[i)]
\item $\size^1_f(\P_n^{\mathrm{col},\mathrm{row}}) \lesssim \min(2^{-n} \energy^1_f(\P), \size^1_f(\P))$,
\item $\size^2_g(\P_n^{\mathrm{col},\mathrm{row}}) \lesssim \min(2^{-n} \energy^2_g(\P), \size^2_g(\P))$,
\item $\size^3_{\mathbf{h}}(\P_n^{\mathrm{col},\mathrm{row}}) \lesssim \min(2^{-r_0 n/{r'}} \energy^3_{\mathbf{h}}(\P), \size^3_{\mathbf{h}}(\P))$,
\item $\P_n^{\mathrm{col}}$ is organized into a collection $\mathfrak{C}_n$ of disjoint columns,
\item $\P_n^{\mathrm{row}}$ is organized into a collection $\mathfrak{R}_n$ of disjoint rows,
\item $\sum_{\mathcal{C}\in\mathfrak{C}_n}|I_{\mathcal{C}}| \lesssim 2^{r_0 n}$,
\item $\sum_{\mathcal{R}\in\mathfrak{R}_n}|I_{\mathcal{R}}| \lesssim 2^{r_0 n}$.
\end{enumerate}
The collection $\P_n^{\mathrm{col}}$ is empty if $n$ is such that 
\[ 2^{-n} \gtrsim \frac{\size^1_f(\P)}{\energy^1_f(\P)} \quad \text{and} \quad 2^{-r_0 n/{r'}} \gtrsim \frac{\size^3_{\mathbf{h}}(\P)}{\energy^3_{\mathbf{h}}(\P)}, \]
and similarly the collection $\P_n^{\mathrm{row}}$ is empty if $n$ is such that 
\[ 2^{-n} \gtrsim \frac{\size^2_g(\P)}{\energy^2_g(\P)} \quad \text{and} \quad 2^{-r_0 n/{r'}} \gtrsim \frac{\size^3_{\mathbf{h}}(\P)}{\energy^3_{\mathbf{h}}(\P)}. \]
\end{lemma}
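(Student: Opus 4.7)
The plan is to iterate the three decomposition lemmas (Lemmas \ref{decomposition_lemma_size_1_2} and \ref{decomposition_lemma_size_3}) in a single downward sweep over the level parameter $n$, peeling off at each stage the portion of $\P$ on which any of the three sizes exceeds the threshold for that level. The starting index $n_0$ is chosen so that all three sizes on $\P$ are already below the $n_0$-threshold, namely
\[ \size^1_f(\P) \leq 2^{-n_0}\energy^1_f(\P), \quad \size^2_g(\P) \leq 2^{-n_0}\energy^2_g(\P), \quad \size^3_{\mathbf{h}}(\P) \leq \gamma^{-n_0}\energy^3_{\mathbf{h}}(\P), \]
and $\P_n^{\mathrm{col}} = \P_n^{\mathrm{row}} = \emptyset$ is declared for $n > n_0$, which matches the stated stopping condition.

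The key iteration step runs as follows. Suppose we have a remainder $\P^{(n)}$ (initialized to $\P^{(n_0)} := \P$) that satisfies $\size^1_f(\P^{(n)}) \leq 2^{-n}\energy^1_f$, $\size^2_g(\P^{(n)}) \leq 2^{-n}\energy^2_g$, $\size^3_{\mathbf{h}}(\P^{(n)}) \leq \gamma^{-n}\energy^3_{\mathbf{h}}$. First I would apply Lemma \ref{decomposition_lemma_size_1_2} (in its $\size^1$ form) at level $n$ to extract a family of mutually disjoint columns $\mathfrak{C}^{(1)}_n$, leaving a remainder with $\size^1 \leq 2^{-n-1}\energy^1$. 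Then I would apply the $\size^2$ analogue to this new remainder to extract rows $\mathfrak{R}^{(2)}_n$, reducing $\size^2$ below $2^{-n-1}\energy^2$ without affecting $\size^1$. Finally I would invoke Lemma \ref{decomposition_lemma_size_3} at level $n$ on the surviving collection, extracting a family $\mathfrak{T}^{(3)}_n$ of mutually disjoint rows and columns. Gathering all extracted columns into $\P_n^{\mathrm{col}}$ and all extracted rows into $\P_n^{\mathrm{row}}$, what is left becomes $\P^{(n-1)}$ and by construction meets the three hypotheses at the next level, letting the iteration continue.

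Once this construction is set up, properties (i)--(iii) come for free, since $\P_n^{\mathrm{col}}, \P_n^{\mathrm{row}} \subseteq \P^{(n)}$ and so inherit its level-$n$ size bounds (using $\gamma^{-n} = 2^{-r_0 n/r'}$ to reconcile thresholds); monotonicity of sizes under subcollections is what produces the trivial $\min(\cdot, \size(\P))$ alternative. Properties (iv), (v) are automatic by construction, and (vi), (vii) are obtained by adding the $\sum|I_\mathcal{T}| \lesssim 2^{r_0 n}$ estimates delivered separately by the two decomposition lemmas at level $n$.

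The step I expect to require the most care is ensuring mutual disjointness of the combined column family $\mathfrak{C}^{(1)}_n \cup \{\mathcal{C} \in \mathfrak{T}^{(3)}_n\}$, and likewise for rows. The two stopping-time algorithms producing $\mathfrak{C}^{(1)}_n$ and $\mathfrak{T}^{(3)}_n$ are run independently, and a priori their tops may be comparable under $<_1$. The resolution I have in mind is to observe that if two such tops are nested then I can discard the smaller column and enlarge the larger one to absorb its tri-tiles, which does not affect the tri-tile support of $\P_n^{\mathrm{col}}$ and preserves the $\sum|I_\mathcal{C}| \lesssim 2^{r_0 n}$ bound (the intervals are nested). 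An alternative, and probably cleaner, route is to run a single stopping time at each level that considers both size-1 and size-3 high tiles simultaneously, selecting maximal tri-tiles under $<_1$ for columns and $<_2$ for rows; I would choose whichever formulation keeps the energy-to-measure accounting most transparent.
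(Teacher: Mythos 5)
Your proof is correct and follows essentially the same strategy as the paper's: iterate the decomposition Lemmas \ref{decomposition_lemma_size_1_2} and \ref{decomposition_lemma_size_3}, collecting the extracted columns into $\P_n^{\mathrm{col}}$ and rows into $\P_n^{\mathrm{row}}$ and updating the remainder; the only cosmetic difference is that you apply the three lemmas in a fixed order $1,2,3$ at each level $n$, whereas the paper picks which lemma to apply next according to which of the normalized sizes $\size^1/\energy^1$, $\size^2/\energy^2$, $(\size^3/\energy^3)^{r'/r_0}$ is currently largest — both schemes yield the same conclusion. One small remark: the ``mutual disjointness'' concern you raise at the end is not actually needed for the lemma as stated and used. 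In Lemma \ref{general_estimate} the only properties invoked are that $\mathfrak{C}_n$ partitions $\P_n^{\mathrm{col}}$ as sets of tri-tiles (so $|\Lambda_{\P_n^{\mathrm{col}}}|\le\sum_{\mathcal C}|\Lambda_{\mathcal C}|$) together with the measure bound $\sum_{\mathcal C\in\mathfrak{C}_n}|I_{\mathcal C}|\lesssim 2^{r_0 n}$; neither requires the tops of columns coming from the $\size^1$ extraction and the $\size^3$ extraction to be incomparable under $<_1$, and since those two extractions operate on disjoint sub-collections of tri-tiles, disjointness as tri-tile sets is automatic. The merging step you propose is therefore harmless but unnecessary.
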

\begin{proof}
Initialize $\P_{\mathrm{stock}} := \P$ and apply iteratively the decomposition Lemmas \ref{decomposition_lemma_size_1_2} and \ref{decomposition_lemma_size_3}, in the order given by whichever of the quantities 
\[\frac{\size^1_f(\P_{\mathrm{stock}})}{\energy^1_f(\P)}, \quad
\frac{\size^2_g(\P_{\mathrm{stock}})}{\energy^2_g(\P)}, \quad
\Big(\frac{\size^3_{\mathbf{h}}(\P_{\mathrm{stock}})}{\energy^3_{\mathbf{h}}(\P)}\Big)^{{r'}/r_0} \]
is largest, sorting columns and rows into the current $\P^{\mathrm{col}}, \P^{\mathrm{row}}$ respectively and updating $\P_{\mathrm{stock}}$ at the end of each step to be the collection $(\P_{\mathrm{stock}})_{\mathrm{low}}$ resulting from the last application of a decomposition lemma. We omit the details.
\end{proof}
\subsection{General estimate for $\Lambda_{\P}$}\label{section:general_estimate}
%
%
In this section we prove the following general estimate, which will then be the main ingredient in the proof of Proposition \ref{prop:r=2} in \S \ref{section:proof_of_main_theorem}.
\begin{lemma}\label{general_estimate}
Let $\Omega, \Lambda$ be as above and let $\P$ be a collection of tri-tiles. Let $\sigma = \frac{r-r_0}{r}$ and denote for shortness 
\begin{align*}
&\size^1_f(\P) =: \mathcal{S}_1, \qquad \energy^1_f(\P)=:\mathcal{E}_1, \\
&\size^2_g(\P) =: \mathcal{S}_2, \qquad \energy^2_g(\P)=:\mathcal{E}_2, \\
&\size^3_{\mathbf{h}}(\P) =: \mathcal{S}_3, \qquad \energy^3_{\mathbf{h}}(\P)=:\mathcal{E}_3.
\end{align*}
Then we have 
\begin{equation}\label{eqn:general_estimate}
\begin{aligned}
|\Lambda_{\P}(f,g,\mathbf{h})| \lesssim & \Big[\sup_{P \in \P}\fint_{I_P} |\VarC{r_0}g|^{r_0}\Big]^{1/r} \\
& \qquad\qquad \times \mathcal{S}_1^{2\sigma\theta_1} \mathcal{E}_1^{1 - 2\sigma\theta_1} 
\mathcal{S}_2^{2\sigma\theta_2} \mathcal{E}_2^{\sigma - 2\sigma\theta_2} 
\mathcal{S}_3^{2\sigma\theta_3 {r'}/{r_0}} \mathcal{E}_3^{1 - 2\sigma \theta_3 {r'}/{r_0}}   \\
& +  \Big[\sup_{P \in \P}\fint_{I_P} |\VarC{r_0}f|^{r_0}\Big]^{1/r} \\
& \qquad\qquad \times \mathcal{S}_1^{2\sigma\xi_1} \mathcal{E}_1^{\sigma - 2\sigma\xi_1} 
\mathcal{S}_2^{2\sigma\xi_2} \mathcal{E}_2^{1 - 2\sigma\xi_2} 
\mathcal{S}_3^{2\sigma\xi_3 {r'}/{r_0}} \mathcal{E}_3^{1 - 2 \sigma\xi_3 {r'}/{r_0}} 
\end{aligned}
\end{equation}
for any $\theta_j, \xi_j$ such that $\theta_1 + \theta_2 + \theta_3 = 1$ and respectively $\xi_1 + \xi_2 + \xi_3 = 1$, and 
\begin{equation*}
\begin{aligned}[c]
0 & \leq \theta_1 \leq \min(1, (2\sigma)^{-1}),\\
0 & \leq \theta_2 \leq \frac{1}{2},\\
0 & < \theta_3 \leq 1,
\end{aligned}
\qquad
\begin{aligned}[c]
0 & \leq \xi_1 \leq \frac{1}{2},\\
0 & \leq \xi_2 \leq \min(1, (2\sigma)^{-1}),\\
0 & < \xi_3 \leq 1.
\end{aligned}
\end{equation*}
\end{lemma}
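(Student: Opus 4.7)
The plan is to combine the global decomposition of Lemma \ref{global_decomposition_lemma} with the per-column/per-row bound of Proposition \ref{column_row_estimate} (rewritten via Lemma \ref{extra_term_control} to expose the Variational Carleson averages) and then to sum over the decomposition index $n$ by exploiting the tent-shape of the resulting summand together with a flexible weighted-geometric-mean interpolation of the three $\min$'s that appear.

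Concretely, I would first decompose $\P = \bigsqcup_{n}(\P_n^{\mathrm{col}} \sqcup \P_n^{\mathrm{row}})$ via Lemma \ref{global_decomposition_lemma} and split $|\Lambda_\P| \leq \sum_n |\Lambda_{\P_n^{\mathrm{col}}}| + \sum_n |\Lambda_{\P_n^{\mathrm{row}}}|$. By symmetry it suffices to treat the column contribution, which produces the first term of \eqref{eqn:general_estimate}; the row contribution produces the second term by the same argument with the roles of $f$ and $g$ (and columns and rows) exchanged. For each $\mathcal{C} \in \mathfrak{C}_n$, Proposition \ref{column_row_estimate} together with Lemma \ref{extra_term_control} (to bound the residual $g$-average pointwise by $\VarC{r_0}g$) gives
\[
|\Lambda_\mathcal{C}| \lesssim \Bigl(\fint_{I_\mathcal{C}}|\VarC{r_0}g|^{r_0}\Bigr)^{1/r}\, \size^1_f(\mathcal{C})\, \size^2_g(\mathcal{C})^\sigma\, \size^3_{\mathbf{h}}(\mathcal{C})\, |I_\mathcal{C}|,
\]
and the leading average is uniformly dominated by $V_g := \bigl[\sup_{P\in\P}\fint_{I_P}|\VarC{r_0}g|^{r_0}\bigr]^{1/r}$, i.e.\ the first supremum appearing in \eqref{eqn:general_estimate}. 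Summing over $\mathfrak{C}_n$, pulling out the sizes via (i)--(iii) of Lemma \ref{global_decomposition_lemma}, and applying the length bound (vi) $\sum_\mathcal{C} |I_\mathcal{C}| \lesssim 2^{r_0 n}$ produces
\[
|\Lambda_{\P_n^{\mathrm{col}}}| \lesssim V_g\, \min(2^{-n}\mathcal{E}_1,\mathcal{S}_1)\, \min(2^{-n}\mathcal{E}_2,\mathcal{S}_2)^\sigma\, \min(2^{-r_0 n/r'}\mathcal{E}_3,\mathcal{S}_3)\, 2^{r_0 n}.
\]

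To sum this in $n$, observe that $\sigma = (r-r_0)/r$ yields the cancellation $1 + \sigma + r_0/r' - r_0 = 2\sigma$: when all three $\min$'s are in their size branch (small $n$) the summand grows like $2^{r_0 n}$, and when all three are in their energy branch (large $n$) it decays like $2^{-2\sigma n}$. Since $r_0, \sigma > 0$ these rates are nontrivial and the summand is log-concave in $n$, so $\sum_n|\Lambda_{\P_n^{\mathrm{col}}}| \lesssim \sup_n |\Lambda_{\P_n^{\mathrm{col}}}|$. The supremum is then controlled by the weighted interpolation $\min(a,b) \leq a^{1-\gamma}b^\gamma$ applied separately to each $\min$, with $\mathcal{S}_j$-exponents $2\sigma\theta_1$, $2\sigma\theta_2$ and $2\sigma\theta_3 r'/r_0$; the admissibility $\gamma_j \in [0, e_j]$ with total powers $e_1=1$, $e_2=\sigma$, $e_3=1$ translates into the constraints on $\theta_j$ stated in the lemma, while the simplex condition $\theta_1+\theta_2+\theta_3 = 1$ is precisely what forces the net $2^n$-exponent of the interpolated summand to vanish, so the resulting bound is $n$-independent and coincides with the first summand of \eqref{eqn:general_estimate}.

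The main obstacle I anticipate is the bookkeeping for this $n$-sum: one must guarantee that no logarithmic-in-$\#\Omega$ loss appears, which is exactly what the constraint $\theta_1+\theta_2+\theta_3 = 1$ achieves, and which in turn depends on having $r_0>2$ so that $\sigma>0$ and the decay at $n\to +\infty$ is genuinely geometric --- this is the source of the qualitative difference with the $r_0=2$ case discussed in Remark \ref{remark_explanation_r_0}. The row contribution is handled identically after swapping $f$ with $g$ and columns with rows, invoking (vii) instead of (vi) of Lemma \ref{global_decomposition_lemma}, and it produces the second summand of \eqref{eqn:general_estimate} with parameters $\xi_j$.
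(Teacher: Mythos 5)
Your strategy coincides with the paper's through the production of the per-scale summand
\[
A(n) := \min(2^{-n}\mathcal{E}_1,\mathcal{S}_1)\,\min(2^{-n}\mathcal{E}_2,\mathcal{S}_2)^\sigma\,\min(2^{-r_0 n/r'}\mathcal{E}_3,\mathcal{S}_3)\,2^{r_0 n},
\]
and your interpolation step $\min(a,b)^p \leq a^{p-\gamma}b^{\gamma}$ with $\gamma_1 = 2\sigma\theta_1$, $\gamma_2 = 2\sigma\theta_2$, $\gamma_3 = 2\sigma\theta_3 r'/r_0$ correctly recovers the stated constraints on $\theta_j$ (including the observation that $\gamma_3 \leq 1$ is weaker than $\theta_3 \leq 1$, since $2\sigma r'/r_0 < 1$ for $2 < r_0 < r$) and makes the $n$-exponent vanish exactly when $\theta_1+\theta_2+\theta_3=1$. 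The paper instead sums $A(n)$ by a case analysis, so what you propose is a genuinely more compact route; unfortunately it has a gap in the step $\sum_n A(n) \lesssim \sup_n A(n)$.

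The inference ``$A$ is log-concave with slope $r_0>0$ at $n\to-\infty$ and slope $-2\sigma<0$ at $n\to+\infty$, hence $\sum_n A(n)\lesssim\sup_n A(n)$'' is not valid: a concave piecewise-linear function of $n$ may have an interior linear segment of slope exactly zero (a plateau of arbitrary length), which log-concavity and the endpoint slopes do not preclude. This degeneracy does occur here. If the transition points are ordered so that the mins for $j=2$ and $j=3$ cross into their energy branch before the min for $j=1$ does (the ordering \eqref{eqn:order_assumption} the paper fixes w.l.o.g.), then for $n$ in the intermediate regime the slope of $\log_2 A(n)$ equals $1 - 2\sigma$, which vanishes precisely when $\sigma=\tfrac12$, i.e.\ $r_0 = r/2$. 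This is an admissible parameter value ($2 < r_0 < r$), and in that case $\sum_n A(n)$ exceeds $\sup_n A(n)$ by a factor $\log_2\!\big(\tfrac{\mathcal{S}_2}{\mathcal{E}_2}\cdot\tfrac{\mathcal{E}_1}{\mathcal{S}_1}\big)$, which is unbounded. So as stated your argument cannot deliver the lemma at $\sigma=\tfrac12$, nor does it keep the implied constant from blowing up as $\sigma\to\tfrac12$.

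This is exactly the subcase the paper isolates (case $2\sigma-1=0$, its subcase $2)\,iii)$), and the fix there is to absorb the logarithm by $\log x \lesssim_\eps x^\eps$ with $\eps = 2\sigma(\theta_2+\theta_3) > 0$, which crucially uses the strict positivity $\theta_3 > 0$ in the hypotheses — an assumption your argument never invokes. To repair your proof, either replicate that logarithm-to-power estimate on the plateau segment, or abandon the two-step ``sup, then interpolate'' scheme and instead interpolate each $\min$ with $\gamma_j$'s chosen slightly unbalanced (so that the net $n$-exponent is a small $+\eps$ for $n$ below the peak and a small $-\eps$ for $n$ above it, still landing at the target exponents after combining); either way the positivity of $\theta_3$ must enter.
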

\begin{proof}
%
%
Apply the global decomposition lemma (Lemma \ref{global_decomposition_lemma}) to the collection $\P$, thus obtaining a partition $\P = \bigsqcup_{n} \P_n^{\mathrm{col}} \sqcup \P_n^{\mathrm{row}}$. It suffices to consider the collections $\P_n^{\mathrm{col}}$ (which correspond to the first term in \eqref{eqn:general_estimate}), the proof for the collections $\P_n^{\mathrm{row}}$ being entirely analogous. Since $\P_n^{\mathrm{col}}$ is organized into a collection $\mathfrak{C}_n$ of disjoint columns, using Proposition \ref{column_row_estimate} we can bound 
\begin{align*}
 |\Lambda_{\P_n^{\mathrm{col}}}(f,g,\mathbf{h})| & \leq \sum_{\mathcal{C} \in \mathfrak{C}_n} |\Lambda_{\mathcal{C}}(f,g,\mathbf{h})| \\
 \lesssim & \sum_{\mathcal{C} \in \mathfrak{C}_n} \Big(\frac{1}{|I_{\mathcal{C}}|} \sum_{P\in\mathcal{C}} \int_{I_P}|\pi_{\omega_2(P)}g|^{r_0} \Big)^{1/r} \size^1_f(\mathcal{C}) \big[\size^2_g(\mathcal{C})\big]^{\sigma} \size^3_{\mathbf{h}}(\mathcal{C})  |I_\mathcal{C}| \\
 \lesssim & \sum_{\mathcal{C} \in \mathfrak{C}_n} \Big(\frac{1}{|I_{\mathcal{C}}|} \sum_{P\in\mathcal{C}} \int_{I_P}|\pi_{\omega_2(P)}g|^{r_0} \Big)^{1/r} \min(2^{-n}\mathcal{E}_1, \mathcal{S}_1) \big[\min(2^{-n}\mathcal{E}_2, \mathcal{S}_2)\big]^{\sigma} \\
 & \qquad\qquad\qquad\qquad\qquad\qquad \times \min(2^{-r_0 n/{r'}}\mathcal{E}_3, \mathcal{S}_3) |I_\mathcal{C}|; 
\end{align*} 
by Lemma \ref{extra_term_control} term $\Big(\frac{1}{|I_{\mathcal{C}}|} \sum_{P\in\mathcal{C}} \int_{I_P}|\pi_{\omega_2(P)}g|^{r_0} \Big)^{1/r}$ can be replaced with 
\[ \sup_{P \in \P}\Big[\fint_{I_P} |\VarC{r_0}g|^{r_0}\Big]^{1/r}, \]
which then factors out of the sum. By definition of $\P_n^{\mathrm{col}}$, what remains is controlled by 
\[ \min(2^{-n}\mathcal{E}_1, \mathcal{S}_1) \big[\min(2^{-n}\mathcal{E}_2, \mathcal{S}_2)\big]^{\sigma} \min(2^{-r_0 n/{r'}}\mathcal{E}_3, \mathcal{S}_3) 2^{r_0 n}, \]
and therefore it suffices to show that the sum over $n$ of all these contributions is controlled by the corresponding product of sizes and energies in the first term of the right hand side of \eqref{eqn:general_estimate}. This requires a tedious but easy case by case analysis.
%
%
Assume that 
\begin{equation}\label{eqn:order_assumption} 
\frac{\mathcal{S}_1}{\mathcal{E}_1} < \frac{\mathcal{S}_2}{\mathcal{E}_2} < \Big(\frac{\mathcal{S}_3}{\mathcal{E}_3}\Big)^{{r'}/{r_0}},
\end{equation}
the other cases being similar and thus omitted. We have
%
%
\begin{enumerate}[1)]
%
%
\item \textbf{case} $2^{-n}\leq \frac{\mathcal{S}_1}{\mathcal{E}_1} < \frac{\mathcal{S}_2}{\mathcal{E}_2} < \Big(\frac{\mathcal{S}_3}{\mathcal{E}_3}\Big)^{{r'}/{r_0}}$: in this case the sum we have to bound becomes 
\begin{align*}
\sum_{n \;:\; 2^{-n} \leq \mathcal{S}_1 \mathcal{E}_1^{-1}} & 2^{-n} \mathcal{E}_1 2^{-n\sigma} \mathcal{E}_2^{\sigma} 2^{-r_0 n/{r'}}\mathcal{E}_3 2^{r_0 n}\\
= & \mathcal{E}_1 \mathcal{E}_2^{\sigma} \mathcal{E}_3 
\sum_{n \;:\; 2^{-n}\leq \mathcal{S}_1 \mathcal{E}_1^{-1}} 2^{-n(1 + \sigma - r_0/r)},
\end{align*}
and since $1 + \sigma - r_0 / r = 2\sigma$ the above evaluates to 
\[ \mathcal{S}_1^{2\sigma} \mathcal{E}_1^{1 - 2\sigma} \mathcal{E}_2^{\sigma} \mathcal{E}_3, \]
which by assumption \eqref{eqn:order_assumption} is clearly controlled by the desired 
\[ \mathcal{S}_1^{2\sigma\theta_1} \mathcal{E}_1^{1 - 2\sigma\theta_1} 
\mathcal{S}_2^{2\sigma\theta_2} \mathcal{E}_2^{\sigma - 2\sigma\theta_2} 
\mathcal{S}_3^{2\sigma\theta_3 {r'}/{r_0}} \mathcal{E}_1^{1 - 2\sigma\theta_3 {r'}/{r_0}}.\]
%
%
\item \textbf{case} $\frac{\mathcal{S}_1}{\mathcal{E}_1} < 2^{-n} \leq \frac{\mathcal{S}_2}{\mathcal{E}_2} < \Big(\frac{\mathcal{S}_3}{\mathcal{E}_3}\Big)^{{r'}/{r_0}}$: in this case the sum becomes 
\begin{align*}
\sum_{n \;:\; \mathcal{S}_1 \mathcal{E}_1^{-1} < 2^{-n} \leq \mathcal{S}_2 \mathcal{E}_2^{-1}} & \mathcal{S}_1 2^{-n\sigma} \mathcal{E}_2^{\sigma} 2^{-r_0 n/{r'}}\mathcal{E}_3 2^{r_0 n}\\
= & \mathcal{S}_1 \mathcal{E}_2^{\sigma} \mathcal{E}_3 
\sum_{n \;:\; \mathcal{S}_1 \mathcal{E}_1^{-1} < 2^{-n} \leq \mathcal{S}_2 \mathcal{E}_2^{-1}} 2^{-n(\sigma - {r_0}/r)},
\end{align*}
and $\sigma - r_0 / r = 2\sigma -1$. Thus we have further sub-cases:
%
%
\begin{enumerate}[i)]
\item \textbf{subcase} $2\sigma -1 < 0$: in this case the sum is controlled by 
\[ \mathcal{S}_1 \mathcal{E}_2^{\sigma} \mathcal{E}_3 \Big(\frac{\mathcal{S}_1}{\mathcal{E}_1}\Big)^{2\sigma - 1} = \mathcal{S}_1^{2\sigma} \mathcal{E}_1^{1 - 2\sigma} \mathcal{E}_2^{\sigma} \mathcal{E}_3, \]
which we have already established is fine;
\item \textbf{subcase} $2\sigma -1 > 0$: in this case the sum is controlled by 
\begin{align*}
\mathcal{S}_1 \mathcal{E}_2^{\sigma} \mathcal{E}_3 \Big(\frac{\mathcal{S}_2}{\mathcal{E}_2}\Big)^{2\sigma - 1} 
= \mathcal{E}_1 \Big(\frac{\mathcal{S}_1}{\mathcal{E}_1}\Big)^{2\sigma \theta_1} \mathcal{E}_2^{\sigma} \mathcal{E}_3 \Big(\frac{\mathcal{S}_1}{\mathcal{E}_1}\Big)^{1 - 2\sigma \theta_1}\Big(\frac{\mathcal{S}_2}{\mathcal{E}_2}\Big)^{2\sigma - 1},
\end{align*}
and since by assumption $1 - 2\sigma \theta_1 \geq 0$ we can further bound this by 
\begin{align*}
\mathcal{E}_1 \Big(\frac{\mathcal{S}_1}{\mathcal{E}_1}\Big)^{2\sigma \theta_1} \mathcal{E}_2^{\sigma} \mathcal{E}_3 \Big(\frac{\mathcal{S}_2}{\mathcal{E}_2}\Big)^{1 - 2\sigma \theta_1}\Big(\frac{\mathcal{S}_2}{\mathcal{E}_2}\Big)^{2\sigma - 1}
= \mathcal{E}_1 \Big(\frac{\mathcal{S}_1}{\mathcal{E}_1}\Big)^{2\sigma \theta_1} \mathcal{E}_2^{\sigma} \mathcal{E}_3 \Big(\frac{\mathcal{S}_2}{\mathcal{E}_2}\Big)^{2\sigma\theta_2 + 2\sigma \theta_3},
\end{align*}
which is clearly controlled by the desired quantity;
\item \textbf{subcase} $2\sigma -1 = 0$: in this case the sum is controlled by 
\begin{align*}
\mathcal{S}_1 \mathcal{E}_2^{\sigma} \mathcal{E}_3 \log\Big(\frac{\mathcal{S}_2}{\mathcal{E}_2}\cdot \frac{\mathcal{E}_1}{\mathcal{S}_1}\Big) 
\lesssim & \mathcal{S}_1 \mathcal{E}_2^{\sigma} \mathcal{E}_3 \Big(\frac{\mathcal{S}_2}{\mathcal{E}_2}\cdot \frac{\mathcal{E}_1}{\mathcal{S}_1}\Big)^{2\sigma(\theta_2 + \theta_3)}\\
= & \mathcal{E}_1 \Big(\frac{\mathcal{S}_1}{\mathcal{E}_1}\Big)^{1 - 2\sigma(\theta_2 + \theta_3)} \mathcal{E}_2^{\sigma} \Big(\frac{\mathcal{S}_2}{\mathcal{E}_2}\Big)^{2\sigma(\theta_2 + \theta_3)} \mathcal{E}_3,
\end{align*}
which is again the desired quantity since for this value of $\sigma$ it is $1 - 2\sigma(\theta_2 + \theta_3) = 2\sigma\theta_1$.
\end{enumerate}
%
%
\item \textbf{case} $\frac{\mathcal{S}_1}{\mathcal{E}_1} < \frac{\mathcal{S}_2}{\mathcal{E}_2} < 2^{-n} \leq\Big(\frac{\mathcal{S}_3}{\mathcal{E}_3}\Big)^{{r'}/{r_0}}$: in this case the sum becomes 
\begin{align*}
\sum_{n \;:\; \mathcal{S}_2 \mathcal{E}_2^{-1} < 2^{-n} \leq \mathcal{S}_3^{{r'}/{r_0}} \mathcal{E}_2^{-{r'}/{r_0}}} & \mathcal{S}_1 \mathcal{S}_2^{\sigma} 2^{-r_0 n/{r'}}\mathcal{E}_3 2^{r_0 n}\\
= & \mathcal{S}_1 \mathcal{S}_2^{\sigma} \mathcal{E}_3 
\sum_{n \;:\; \mathcal{S}_2 \mathcal{E}_2^{-1} < 2^{-n} \leq \mathcal{S}_3^{{r'}/{r_0}} \mathcal{E}_2^{-{r'}/{r_0}}} 2^{r_0 n/r};
\end{align*}
since $r_0/r = 1 - \sigma$, this is controlled by 
\[ \mathcal{S}_1 \mathcal{S}_2^{\sigma} \mathcal{E}_3 \Big(\frac{\mathcal{S}_2}{\mathcal{E}_2}\Big)^{\sigma - 1} = 
\mathcal{S}_1 \mathcal{E}_2^{\sigma} \mathcal{E}_3 \Big(\frac{\mathcal{S}_2}{\mathcal{E}_2}\Big)^{2\sigma - 1}, \]
which we have encountered in a previous case and is therefore fine too.
\end{enumerate}
Thus the proof is concluded.
\end{proof}
\subsection{Proof of the main theorem}\label{section:proof_of_main_theorem}
%
%
We are now ready to prove the main theorem (Theorem \ref{mainthm}). It will be obtained by interpolation between the two extreme situations, namely $r=\infty$ and $r$ close to $2$.\\
For the first case, we only use the Carleson operator which is bounded on all $L^p$ spaces for $p\in(1,\infty)$ to deduce the following:
%
%
\begin{proposition} \label{prop:r=infty} The bilinear operator $T^\infty_\Omega$ given by 
\[ T^\infty_\Omega (f,g)(x) := \sup_{\omega\in\Omega}|\pi_\omega(f,g)(x)| \]
is bounded from $L^p \times L^q$ to $L^s$ for all $1<p,q<\infty$, where $1/p + 1/q = 1/s$.
\end{proposition}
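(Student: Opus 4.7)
The proof should be essentially immediate by combining the factorization of the non-smooth bilinear projection with the Carleson--Hunt theorem. The plan is to exploit the fact that $\pi_\omega$ is a tensor product of one-dimensional frequency projections, and then to pointwise dominate each factor by the Carleson maximal operator before applying H\"older's inequality.

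More precisely, the first step is to recall that since $\omega = \omega_1 \times \omega_2$, the bilinear projection factorizes as
\[ \pi_\omega(f,g)(x) = \pi_{\omega_1}f(x) \cdot \pi_{\omega_2}g(x), \]
so that
\[ |\pi_\omega(f,g)(x)| \leq |\pi_{\omega_1}f(x)| \cdot |\pi_{\omega_2}g(x)| \leq \mathscr{C}f(x)\cdot \mathscr{C}g(x), \]
where the last inequality uses the trivial fact that for any interval $I$, $|\pi_I f(x)| \leq \mathscr{C}f(x)$, as was already observed in Example \ref{ex:grid}. Crucially, the right-hand side is independent of $\omega$, so taking the supremum over $\omega \in \Omega$ yields the pointwise bound
\[ T^\infty_\Omega(f,g)(x) \leq \mathscr{C}f(x)\cdot \mathscr{C}g(x). \]

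The second step is to take the $L^s$ norm of both sides and apply H\"older's inequality with exponents $p, q$ satisfying $1/p + 1/q = 1/s$, obtaining
\[ \|T^\infty_\Omega(f,g)\|_{L^s} \leq \|\mathscr{C}f\|_{L^p}\|\mathscr{C}g\|_{L^q}. \]
The proof then concludes by invoking the Carleson--Hunt theorem, which guarantees that $\mathscr{C}$ is bounded on $L^p(\mathbb{R})$ and $L^q(\mathbb{R})$ for all $1 < p, q < \infty$, so that the right-hand side is controlled by $\|f\|_{L^p}\|g\|_{L^q}$ with the implicit constant depending only on $p$ and $q$ (and in particular independent of $\Omega$).

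There is no real obstacle here: the entire argument rests on the factorization of $\pi_\omega$ (which is special to the non-smooth projection onto a rectangle) and on the off-the-shelf Carleson--Hunt bound. The restriction $1 < p, q < \infty$ in the conclusion is precisely the range of boundedness of $\mathscr{C}$ and cannot be improved by this method; the case $s < 1$ is permitted because after the pointwise factorization one only needs H\"older's inequality, not any true bilinear operator bound.
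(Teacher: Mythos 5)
Your proof is correct and follows essentially the same route as the paper: pointwise domination $T^\infty_\Omega(f,g) \leq \mathscr{C}f \cdot \mathscr{C}g$ via the tensor factorization $\pi_\omega = \pi_{\omega_1}\otimes\pi_{\omega_2}$, then H\"older and the Carleson--Hunt theorem. You have simply spelled out the steps that the paper compresses into a one-line reference to Example \ref{ex:grid}.
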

\begin{proof}
As observed in Example \ref{ex:grid}, the operator $T^\infty_\Omega$ is bounded pointwise by
\[ T^\infty_\Omega (f,g)(x) \leq \mathscr{C}f(x) \cdot \mathscr{C}g(x), \]
and the result then follows from the Carleson-Hunt theorem.
\end{proof}
%
%
%
For the second case, we will prove the following proposition, whose statement is identical to that of Theorem \ref{mainthm} except for the smaller range of $p,q$ (namely $p,q>r_0>2$ here, and hence $s>1$ too).
\begin{proposition} \label{prop:r=2}
Let $r> r_0 > 2$ be fixed (for interpolation purposes, $r$ should be thought of as being very close to $2$). Then for all $p,q,s$ such that 
\[ \frac{1}{p} + \frac{1}{q} = \frac{1}{s} \]
and 
\[ r_0 < p,q < r, \quad r_0/2 < s < r/2 \]
and for every arbitrary collection $\Omega$ of disjoint dyadic squares in $\widehat{\R^2}$, the estimate
\begin{equation}\label{eqn:prop_estimate}
\Big\| \Big(\sum_{\omega\in\Omega} |\pi_{\omega}(f,g)(x)|^r \Big)^{1/r} \Big\|_{L^s} \lesssim_{p,q,r,r_0} \|f\|_{L^p} \|g\|_{L^q},
\end{equation}  
holds true for every $f \in L^p, g \in L^q$.
\end{proposition}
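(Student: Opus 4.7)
The plan is to derive Proposition \ref{prop:r=2} via a restricted weak-type argument combined with the general estimate \eqref{eqn:general_estimate}. By duality and the reductions already performed, it suffices to control $\Lambda_{\P}$; by standard multilinear Marcinkiewicz interpolation it is enough to prove the following restricted inequality: for any measurable sets $F, G, H \subset \R$ and any $\mathbf{h} = \{h_\omega\}_{\omega \in \Omega}$ with $\|\{h_\omega\}\|_{\ell^{r'}} \leq \mathds{1}_H$, there exists a major subset $H' \subset H$ with $|H'| \geq |H|/2$ such that
\[ |\Lambda(\mathds{1}_F, \mathds{1}_G, \mathbf{h} \mathds{1}_{H'})| \lesssim |F|^{1/p} |G|^{1/q} |H|^{1/{s'}}. \]
By the natural scaling of $T^r_\Omega$ we may normalize $|H| = 1$ and reduce to proving the bound with $|F|, |G| \leq 1$.

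The exceptional set $E$ will be built from \emph{non-local} operators, in the spirit of \cite{Benea_thesis,BeneaMuscalu_preprint}: let $E$ be the union of the super-level sets
\[ \{x : (M|\mathscr{C}\mathds{1}_F|^{r_0})(x)^{1/{r_0}} > C |F|^{1/p}\}, \quad \{x : (M|\VarC{r_0}\mathds{1}_F|^{r_0})(x)^{1/{r_0}} > C |F|^{1/p}\}, \]
their obvious $G$-analogues, and $\{M \mathds{1}_H > 1/4\}$, where $M$ is the Hardy--Littlewood maximal operator. Since $p, q > r_0 > 2$, the Carleson operator (via Carleson--Hunt), the Variational Carleson operator (via \cite{OberlinSeegerTaoThieleWright}), and $M$ are all bounded on the appropriate Lebesgue spaces, so $|E| \leq 1/2$ once $C$ is chosen large; set $H' := H \setminus E$.

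Next I restrict attention to $\P_{\mathrm{close}} := \{P \in \P : I_P \cap E^c \neq \emptyset\}$. Tri-tiles with $I_P$ buried deep inside $E$ are summed through a dyadic distance-shell decomposition $\{P : 2^{k-1} |I_P| < \mathrm{dist}(I_P, E^c) \leq 2^k |I_P|\}$, exploiting the rapid decay of $\Phi^\omega_I$ off $I$ (this is precisely the $\omega$-refinement of the tail, introduced in Section \ref{section:discretization}, paying off). On $\P_{\mathrm{close}}$, Proposition \ref{size_1_2_control} combined with the definition of $E$ delivers $\size^1_f \lesssim |F|^{1/p}$ and $\size^2_g \lesssim |G|^{1/q}$; the condition $\sum_\omega |h_\omega|^{r'} \leq \mathds{1}_H$ together with $|H \cap I| \lesssim |I|$ on intervals meeting $E^c$ yields $\size^3_{\mathbf{h}} \lesssim 1$; and the supplementary variational-Carleson factors appearing in \eqref{eqn:general_estimate} are controlled by $|G|^{r_0/q}$ and $|F|^{r_0/p}$ respectively. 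Propositions \ref{energy_1_2_control} and \ref{energy_3_control} then supply the global energy bounds $\energy^1_f \lesssim |F|^{1/{r_0}}$, $\energy^2_g \lesssim |G|^{1/{r_0}}$, $\energy^3_{\mathbf{h}} \lesssim 1$.

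The last step is to plug these bounds into Lemma \ref{general_estimate} and choose the interpolation parameters $\theta_j, \xi_j$ so that the aggregate exponents of $|F|$ and $|G|$ reach (or, since $|F|, |G| \leq 1$, dominate) $|F|^{1/p}$ and $|G|^{1/q}$. A brief calculation shows that the natural candidates are $\theta_1 = \min(1, (2\sigma)^{-1})$, $\theta_2 = 1/2$ (and symmetrically $\xi_2 = \min(1, (2\sigma)^{-1})$, $\xi_1 = 1/2$), with $\theta_3, \xi_3$ determined by the constraint $\theta_1 + \theta_2 + \theta_3 = 1$. The main obstacle will be the bookkeeping: one must verify that for every $(p,q)$ in the range $r_0 < p, q < r$ (and hence $r_0/2 < s < r/2$), these choices genuinely land inside the admissibility windows of Lemma \ref{general_estimate}, with $\theta_3, \xi_3 \in (0,1]$ and all the exponents of sizes and energies nonnegative. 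This is precisely the point at which the upper bounds $p, q < r$ and $s < r/2$ enter the proof, restricting the range compared to Theorem \ref{thm:smooth_squares_theorem}.
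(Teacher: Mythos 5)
Your outline follows the same overall architecture as the paper's proof (restricted weak-type reduction, exceptional set built from $\mathscr{C}$ and $\VarC{r_0}$, a near/far split of the tri-tiles with dyadic shells for the far part, and a final application of Lemma~\ref{general_estimate} with the energy bounds from Propositions~\ref{energy_1_2_control} and~\ref{energy_3_control}). However, there are two genuine gaps.

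First, your exceptional set is built from $M(|\mathscr{C}\mathds{1}_F|^{r_0})$ at threshold $\sim |F|^{r_0/p}$, which has measure $\lesssim |F|^{1-r_0/p}$ (with $|H|$ normalized to $1$). Since $p>r_0$ this blows up as $|F|\to\infty$, so the construction only yields a major subset $H'$ under your assumption $|F|,|G|\leq 1$. That normalization is not available: the generalized restricted weak type property needed for Lemma~\ref{lemma:interpolation} must hold for arbitrary $F,G,H$, scaling only allows you to fix $|H|$, and the paper explicitly notes (Remark~\ref{remark:motivation_for_smaller_range}) that the adjoint trick one would ordinarily use to reduce to $|F|,|G|\lesssim|H|$ is unavailable here because the adjoints of $\Lambda$ do not reduce to the original form. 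The paper instead raises $\mathscr{C}f$ and $\mathscr{C}g$ to large powers $\mathfrak{p},\mathfrak{q}\gg r_0$ in the exceptional set, with thresholds $\sim |F|/|H|$, $|G|/|H|$; the resulting level sets have measure $\lesssim |H|$ uniformly in $|F|,|G|$, and the ensuing size bounds $\size^1_f\lesssim(|F|/|H|)^{1/\mathfrak{p}}$, $\size^2_g\lesssim(|G|/|H|)^{1/\mathfrak{q}}$ are a quantitative substitute for the missing $L^\infty$ size estimate. Your $r_0$-powered exceptional set (with the resulting bound $\size^1_f\lesssim|F|^{1/p}$) forgoes this substitute and cannot be repaired by merely enlarging the constant $C$.

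Second, your proposed parameters are inadmissible. With $\theta_1=\min(1,(2\sigma)^{-1})$ and $\theta_2=1/2$, the constraint $\theta_1+\theta_2+\theta_3=1$ forces $\theta_3=\tfrac12-\min(1,(2\sigma)^{-1})\leq 0$ for every $\sigma=(r-r_0)/r\in(0,1)$, contradicting the requirement $\theta_3>0$ in Lemma~\ref{general_estimate}. The paper takes $\theta_j=\xi_j$ (so that the two halves of \eqref{eqn:general_estimate} combine) and effectively chooses $\theta_1=\tfrac{1-r_0/p}{2\sigma}$, $\theta_2=\tfrac{1-r_0/q}{2\sigma}$, letting $\theta_3$ be the remainder. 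One checks $\theta_1,\theta_2\in[0,1/2]$ precisely because $r_0<p,q<r$, and $\theta_3>0$ is equivalent (via H\"older) to $s<r/2$, which is exactly how the stated range emerges. This bookkeeping is the part your proposal leaves unverified, and the candidate values you wrote down do not pass it.
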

\begin{remark}
Clearly, since the choice of $r_0 > 2$ is arbitrary, the above Proposition also holds all the way down to $r_0 = 2$. The definitions of Sizes and Energies have to change accordingly. However, for presentation purposes, we prefer to work with a fixed choice of Size and Energy since this doesn't affect the range one obtains after the interpolation argument that follows.
\end{remark}
Theorem \ref{mainthm} follows from Proposition \ref{prop:r=infty} and \ref{prop:r=2} by multilinear interpolation of vector-valued operators. More precisely, it will follow from a straightforward application of the next lemma (due to Silva, \cite{Silva}), which we state after a definition. 
%
%
\begin{definition}
Let $\Lambda(f,g,\mathbf{h})$ be a trilinear form and let $(\alpha_1,\alpha_2,\alpha_3;t)$ be such that $0 \leq \alpha_1, \alpha_2 \leq 1$, $\alpha_3\leq 1$, $\alpha_1 + \alpha_2 + \alpha_3 = 1$ and $t\geq 1$. We say that $\Lambda$ is of \emph{generalized restricted weak type} $(\alpha_1,\alpha_2,\alpha_3;t)$ if for every measurable subsets $F,G,H$ of $\R$ of finite measure and all functions $f,g$ such that 
\[|f|\leq \mathds{1}_F, \quad |g|\leq \mathds{1}_G \]  
there exists a subset $H' \subseteq H$, called \emph{major subset}, such that $|H'| > \frac{1}{2} |H|$ and for all functions $\mathbf{h}$ such that 
\[ \Big(\sum_{k}|h_k|^t\Big)^{1/t} \leq \mathds{1}_{H'} \] 
the inequality 
\[ |\Lambda(f,g,\mathbf{h})| \lesssim |F|^{\alpha_1}|G|^{\alpha_2}|H|^{\alpha_3}\]
holds true.
\end{definition}
%
%
\begin{lemma}[\cite{Silva}]\label{lemma:interpolation}
Let $\Lambda$ be a trilinear form of generalized restricted weak type $(\alpha_1,\alpha_2,\alpha_3;t_0)$ and $(\beta_1,\beta_2,\beta_3;t_1)$, with the property that the major subset doesn't depend on $(\alpha_1,\alpha_2,\alpha_3;t_0)$ or $(\beta_1,\beta_2,\beta_3;t_1)$. Then for all $\theta$ such that $0 < \theta < 1$, with 
\[ \alpha^{\theta}_j = (1-\theta)\alpha_j + \theta \beta_j, \qquad j=1,2,3  \]
and 
\[ \frac{1}{t_\theta} = \frac{1-\theta}{t_0} + \frac{\theta}{t_1}, \]
it holds that $\Lambda$ is of generalized restricted weak type $(\alpha^{\theta}_1,\alpha^{\theta}_2,\alpha^{\theta}_3, t_\theta)$.
\end{lemma}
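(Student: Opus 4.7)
The plan is to prove this lemma by a Marcinkiewicz-style real interpolation argument, adapted to the vector-valued setting. The key input is the hypothesis that the major subset $H'$ can be chosen to be the same for both endpoints; without this, two endpoint estimates obtained on potentially different restrictions of $H$ could not be added, and the argument would collapse.

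Fix measurable sets $F, G, H$ of finite measure and functions $f, g$ with $|f| \le \mathds{1}_F$, $|g| \le \mathds{1}_G$, and let $H' \subseteq H$ be the common major subset provided by the two hypotheses (this will also serve as the major subset in the conclusion). Without loss of generality $t_0 < t_1$, so $t_0 < t_\theta < t_1$. Given a sequence $\mathbf{h} = \{h_k\}$ with $\bigl(\sum_k |h_k|^{t_\theta}\bigr)^{1/t_\theta} \le \mathds{1}_{H'}$, for any threshold $\lambda > 0$ I would split each coefficient as
\[ h_k = h_k \mathds{1}_{\{|h_k| > \lambda\}} + h_k \mathds{1}_{\{|h_k| \le \lambda\}} =: h_k^{\mathrm{b}} + h_k^{\mathrm{s}}. \]
Since $t_0 - t_\theta < 0$, on $\{|h_k| > \lambda\}$ one has $|h_k|^{t_0} \le \lambda^{t_0 - t_\theta} |h_k|^{t_\theta}$; summing in $k$ and taking a root yields the pointwise bound
\[ \Big(\sum_k |h_k^{\mathrm{b}}|^{t_0}\Big)^{1/t_0} \le \lambda^{1 - t_\theta/t_0} \mathds{1}_{H'}, \]
and symmetrically, since $t_1 - t_\theta > 0$,
\[ \Big(\sum_k |h_k^{\mathrm{s}}|^{t_1}\Big)^{1/t_1} \le \lambda^{1 - t_\theta/t_1} \mathds{1}_{H'}. \]

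At this point I would apply the two endpoint hypotheses with respect to the common major subset $H'$, after renormalizing: by scaling, the first hypothesis gives
\[ |\Lambda(f, g, \mathbf{h}^{\mathrm{b}})| \lesssim \lambda^{1 - t_\theta/t_0} |F|^{\alpha_1} |G|^{\alpha_2} |H|^{\alpha_3} =: \lambda^p A, \]
and the second
\[ |\Lambda(f, g, \mathbf{h}^{\mathrm{s}})| \lesssim \lambda^{1 - t_\theta/t_1} |F|^{\beta_1} |G|^{\beta_2} |H|^{\beta_3} =: \lambda^q B, \]
with $p = 1 - t_\theta/t_0 < 0$ and $q = 1 - t_\theta/t_1 > 0$. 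Minimizing the sum $\lambda^p A + \lambda^q B$ in $\lambda > 0$ produces the geometric mean bound $\lesssim A^{q/(q-p)} B^{-p/(q-p)}$. A short calculation using $1/t_\theta = (1-\theta)/t_0 + \theta/t_1$ shows that $q/(q-p) = 1 - \theta$ and $-p/(q-p) = \theta$, so the final estimate is $\lesssim |F|^{\alpha_1^\theta}|G|^{\alpha_2^\theta}|H|^{\alpha_3^\theta}$, which is the claimed interpolated bound.

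This is a routine interpolation once set up correctly, and I do not expect any serious technical obstacle. The only point that requires care is precisely the role of the common major subset: this allows both endpoint hypotheses to be applied to pieces of the \emph{same} $\mathbf{h}$ supported in $H'$, which is what makes the real-interpolation split go through. A minor additional check is that the boundary assumptions $0 \le \alpha_j, \beta_j$ with the appropriate caps are preserved by convex combination, so the interpolated exponents $\alpha_j^\theta$ remain admissible.
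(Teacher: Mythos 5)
Your proof is correct, but takes a genuinely different route from the one in the paper. The paper argues by \emph{complex} interpolation: it embeds $\mathbf{h}$ into an analytic family $\mathbf{h}^z$ with $h_k^z := |h_k|^{t(z)}$, $t(z) = (1-z)t_\theta/t_0 + z\,t_\theta/t_1$, observes that on the two boundary lines $\mathrm{Re}\,z\in\{0,1\}$ the resulting $\mathbf{h}^z$ satisfies the $\ell^{t_0}$ (resp.\ $\ell^{t_1}$) constraint, and invokes Hadamard's three-lines lemma to conclude at $z=\theta$. You instead argue by \emph{real} interpolation: you cut each component $h_k$ at a height $\lambda$ into a ``big'' piece satisfying the $\ell^{t_0}$ constraint up to a power of $\lambda$ and a ``small'' piece satisfying the $\ell^{t_1}$ constraint, apply both endpoint hypotheses to the two pieces (crucially using the common major subset $H'$, as you correctly flag), add the two bounds using linearity of $\Lambda$ in the third slot, and optimize over $\lambda$. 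Your computation of the optimal geometric-mean exponents $q/(q-p)=1-\theta$, $-p/(q-p)=\theta$ checks out. Both arguments are standard; yours is somewhat more elementary in that it avoids verifying analyticity and boundedness of $z\mapsto\Lambda(f,g,\mathbf{h}^z)$ on the strip, and sidesteps the (minor but real) subtlety that the paper's formula $h_k^z=|h_k|^{t(z)}$ drops the phase of $h_k$ and so technically reproduces $|\mathbf{h}|$ rather than $\mathbf{h}$ at $z=\theta$. The one small thing you should make explicit is that the split $\{|h_k(x)|>\lambda\}$ depends on $x$, which is fine precisely because $\Lambda$ is pointwise-linear in $\mathbf{h}$ and the endpoint hypotheses are stated for arbitrary $\mathbf{h}$ bounded by $\mathds{1}_{H'}$ in the appropriate $\ell^t$ norm; you use this implicitly but it is worth a sentence.
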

\begin{proof}
The lemma is a particular case of a more general interpolation lemma originating from \cite{Silva} (specifically Lemma 4.3). We sketch the proof here for the reader's convenience.\\
We argue by complex interpolation. Let $F,G,H,H',f,g,\theta$ be given and let $\mathbf{h}$ be such that 
\[ \Big(\sum_k |h_k|^{t_\theta}\Big)^{1/{t_\theta}} \leq \mathds{1}_{H'}.\] 
For $z\in \mathbb{C}$ with $\mathrm{Re} z \in [0,1]$ define $\mathbf{h}^z$ by 
\[ h_k^z(x) := |h_k(x)|^{t(z)} \]
for every $k$, where 
\[ t(z) = (1-z)\frac{t_\theta}{t_0} + z \frac{t_\theta}{t_1}. \]
When $\mathrm{Re} z = 0$ we have $|h_k^z|^{t_0} = |h_k|^{t_\theta}$, and when $\mathrm{Re} z = 1$ we have $|h_k^z|^{t_1} = |h_k|^{t_\theta}$; hence by assumption we have for $\mathrm{Re} z = 0$
\[ |\Lambda(f,g,\mathbf{h}^z)| \lesssim |F|^{\alpha_1}|G|^{\alpha_2}|H|^{\alpha_3}, \]
and for $\mathrm{Re} z = 1$ we have
\[ |\Lambda(f,g,\mathbf{h}^z)| \lesssim |F|^{\beta_1}|G|^{\beta_2}|H|^{\beta_3}. \]
Since the function $\Phi(z) := \Lambda(f,g,\mathbf{h}^z)$ is easily seen to be holomorphic in the open strip $S = \{z \in \mathbb{C} \text{ s.t. } 0 < \mathrm{Re} z < 1 \}$, continuous in its closure and bounded, we can apply to it Hadamard's three-lines-lemma and conclude that since $\mathbf{h}^{\theta + i0} = \mathbf{h}$ we have
\[ |\Lambda(f,g,\mathbf{h})| \lesssim |F|^{\alpha^{\theta}_1}|G|^{\alpha^{\theta}_2}|H|^{\alpha^{\theta}_3}, \]
as desired.
\end{proof}
%
%
Theorem \ref{mainthm} follows by taking $t_1 = \infty' = 1$ and $t_0$ sufficiently close to $2$ and applying Lemma \ref{lemma:interpolation} above to the trilinear form $\Lambda$ in \eqref{eqn:trilinear_form_2}. The hypotheses are verified by Propositions \ref{prop:r=infty} and \ref{prop:r=2} (the major subset for $t_1 = 1$ is just $H$ itself), and we thus get that for a given $r>2$ the trilinear form $\Lambda$ in \eqref{eqn:trilinear_form_2} is of generalized restricted weak type $(\alpha_1,\alpha_2,\alpha_3;r)$ for all $1/{r} < \alpha_1, \alpha_2 < 1/{r'}$; hence the trilinear form $\Lambda_r$ in \eqref{eqn:trilinear_form_1} is of generalized restricted weak type (in the classical sense) $(\alpha_1,\alpha_2,\alpha_3)$ for the same range of $\alpha$'s. Finally, the strong type estimates for $T_{\Omega}^r$ follow by classical multilinear interpolation.
\begin{remark}
The crucial point in the above reasoning is that for $r$ sufficiently close to $2$ Proposition \ref{prop:r=2} gives us a range of boundedness arbitrarily close to $L^2 \times L^2 \to L^1$.
\end{remark}

%
%
\begin{figure}[ht]
\centering
\begin{tikzpicture}[line cap=round,line join=round,>=triangle 45,x=1cm,y=1cm, scale=1.2] rectangle (7.13,4);
\fill[line width=0.5pt,dotted,fill=black,fill opacity=0.31] (1.4,1.4) -- (0.9,1.4) -- (0.9,0.9) -- (1.4,0.9) -- cycle;
\fill[line width=0.5pt,fill=black,fill opacity=0.1] (0.9,0.9) -- (0.9,2.1) -- (2.1,2.1) -- (2.1,0.9) -- cycle;
\draw [line width=0.5pt] (0,0) -- (0,3.2);
\draw [line width=0.5pt] (0,0)-- (3,0);
\draw [line width=0.5pt] (3,0)-- (3,3);
\draw [line width=0.5pt] (3,3)-- (0,3);
\draw [line width=0.5pt] (0,3)-- (0,0);
\draw [line width=0.5pt,dotted] (0,3)-- (3,0);
\draw [line width=0.5pt,dash pattern=on 1pt off 1pt] (0,1.4)-- (1.4,1.4);
\draw [line width=0.5pt,dash pattern=on 1pt off 1pt] (1.4,1.4)-- (1.4,0);
\draw [line width=0.5pt,dotted] (0.9,0.9)-- (0.9,2.1);
\draw [line width=0.5pt,dotted] (0.9,0.9)-- (2.1,0.9);
\draw [line width=0.5pt,dotted] (1.4,1.4)-- (0.9,1.4);
\draw [line width=0.5pt,dotted] (0.9,1.4)-- (0.9,0.9);
\draw [line width=0.5pt,dotted] (0.9,0.9)-- (1.4,0.9);
\draw [line width=0.5pt,dotted] (1.4,0.9)-- (1.4,1.4);
\draw [line width=0.5pt,dash pattern=on 1pt off 1pt] (0.9,0.9)-- (0.9,0);
\draw [line width=0.5pt,dash pattern=on 1pt off 1pt] (0.9,0.9)-- (0,0.9);
\draw [line width=0.5pt,dotted] (0.9,2.1)-- (2.1,0.9);
\draw [line width=0.5pt,dash pattern=on 1pt off 1pt] (0,2.1)-- (2.1,2.1);
\draw [line width=0.5pt,dash pattern=on 1pt off 1pt] (2.1,0)-- (2.1,2.1);
\draw (3.25,0.25) node[anchor=north west] {$1/p$};
\draw (-0.38,3.7) node[anchor=north west] {$1/q$};
\draw (-0.4,3.2) node[anchor=north west] {$1$};
\draw (2.85,0.0) node[anchor=north west] {$1$};
\draw [line width=0.5pt] (0,0) -- (3.191964136360795,0);
\draw (0.6,0) node[anchor=north west] {$1/r$};
\draw (1.1,0) node[anchor=north west] {$1/r_0$};
\draw (1.8,0) node[anchor=north west] {$1/{r'}$};
\draw (-0.6,1.15) node[anchor=north west] {$1/r$};\draw (-0.7,1.6) node[anchor=north west] {$1/r_0$};\draw (-0.65,2.35) node[anchor=north west] {$1/{r'}$};
\end{tikzpicture}
\caption{\footnotesize The darker square corresponds to the $p,q$ range given by Proposition \ref{prop:r=2}; interpolation with Proposition \ref{prop:r=infty} extends the range to that corresponding to the additional lighter area.} \label{figure:range_p_q}
\end{figure}
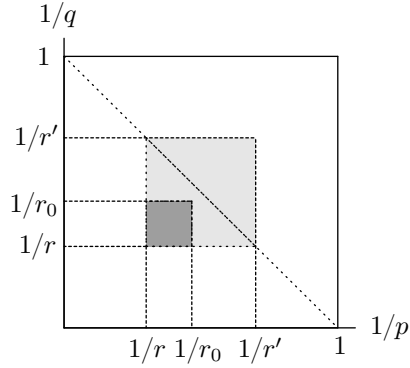
We end the proof of Theorem \ref{mainthm} by proving the last remaining proposition.
\begin{proof}[Proof of Proposition \ref{prop:r=2}]
%
%
The proof follows a standard argument originating from \cite{MuscaluTaoThiele} (although implicitly present in previous work). \\
%
%
By multilinear interpolation, it suffices to prove restricted weak type estimates, that is it suffices to prove that if $F,G,H$ are measurable subsets of $\R$ of finite measure and $f,g$ are measurable functions such that 
\[ |f|\leq \mathds{1}_F, \quad |g|\leq \mathds{1}_{G}, \]
then there exists a subset $H'$ of $H$ such that $|H'| > 1/2 |H|$ and if
\[ \Big(\sum_{\omega\in\Omega}|h_\omega|^{r'}\Big)^{1/{r'}} \leq \mathds{1}_{H'} \]
then it holds that for any collection of tri-tiles $\P$ it is 
\begin{equation}\label{eqn:restricted_weak_type_estimate}
 |\Lambda_{\P}(f,g,\mathbf{h})|\lesssim_{p,q,r,r_0} |F|^{1/p} |G|^{1/q} |H|^{1/{s'}}.
 \end{equation}
%
%
%
Given sets $F,G,H$ and functions $f,g$ as above, we fix two large numbers $\mathfrak{p}, \mathfrak{q}\gg r_0$, and we define the exceptional set $E$ to be 
\begin{align*}
E:= & \Big\{x \in \R \text{ s.t. } M(|\mathscr{C}f|^{\mathfrak{p}})(x) \gtrsim \frac{|F|}{|H|}\Big\} \\
& \cup \Big\{x \in \R \text{ s.t. } M(|\mathscr{C}g|^{\mathfrak{q}})(x) \gtrsim \frac{|G|}{|H|}\Big\} \\
& \cup \Big\{x \in \R \text{ s.t. } M(|\VarC{r_0}f|^{r_0})(x) \gtrsim \frac{|F|}{|H|}\Big\} \\
& \cup \Big\{x \in \R \text{ s.t. } M(|\VarC{r_0}g|^{r_0})(x) \gtrsim \frac{|G|}{|H|}\Big\},
\end{align*}
where $M$ is the dyadic Hardy-Littlewood maximal function. Define $H' := H \backslash E$; 
%
%
we claim that if we choose the implicit constants in the definition of $E$ to be large enough, we have $|H'|>  |H|/2$. Indeed, this follows from the $L^1 \rightarrow L^{1,\infty}$ boundedness of $M$ and the boundedness of the relevant operators for the given exponents, for example
\begin{align*}
\Big|\Big\{x \in \R \text{ s.t. } M(|\mathscr{C}f|^{\mathfrak{p}})(x) \gtrsim \frac{|F|}{|H|}\Big\}\Big| & \lesssim \frac{\|\mathscr{C}f^{\mathfrak{p}}\|_{L^1}}{|F|}|H| \\
& = \frac{\|\mathscr{C}f\|_{L^{\mathfrak{p}}}^{\mathfrak{p}}}{|F|}|H| \lesssim_{\mathfrak{p}} \frac{\|f\|_{L^{\mathfrak{p}}}^{\mathfrak{p}}}{|F|}|H| \ll |H|,
\end{align*}
where we have used the Carleson-Hunt theorem in the second to last inequality. The same holds for the other terms in the definition of $E$, where in particular one also has to invoke the $L^{r_0} \to L^{r_0}$ boundedness of $\VarC{r_0}$ proven in \cite{OberlinSeegerTaoThieleWright}.\\
%
%
Now, we partition the collection $\P$ into
\begin{align*}
\P_{\mathrm{small}} :=& \{P \in \P \text{ s.t. } I_P \not\subset E\}, \\
\P_{\mathrm{large}}:=& \P \backslash \P_{\mathrm{small}},
\end{align*}
and will estimate separately the trilinear forms $\Lambda_{\P_{\mathrm{small}}}$ and $\Lambda_{\P_{\mathrm{large}}}$.\\
%
%
%
We start with $\P_{\mathrm{small}}$. Since $\mathfrak{p}>r_0$ we have 
\[ \Big(\fint_{I_P} |\mathscr{C}f|^{r_0} \Big)^{1/r_0} \leq \Big(\fint_{I_P} |\mathscr{C}f|^{\mathfrak{p}} \Big)^{1/\mathfrak{p}}, \]
so given $P \in \P_{\mathrm{small}}$ we observe that since $I_P \not\subset E$ we must have (see Proposition \ref{size_1_2_control})
\[ \size^1_f(\P_{\mathrm{small}}) \lesssim_{\mathfrak{p}} \Big(\frac{|F|}{|H|}\Big)^{1/\mathfrak{p}}. \]
Similarly, we see that 
\begin{align*}
\sup_{P \in \P_{\mathrm{small}}} \fint_{I_P}|\VarC{r_0}f|^{r_0} & \lesssim \frac{|F|}{|H|}, \\
\size^2_g(\P_{\mathrm{small}}) & \lesssim_{\mathfrak{q}} \Big(\frac{|G|}{|H|}\Big)^{1/\mathfrak{q}}, \\
\sup_{P \in \P_{\mathrm{small}}} \fint_{I_P}|\VarC{r_0}g|^{r_0} & \lesssim \frac{|G|}{|H|};
\end{align*}
moreover, we have trivially
\[ \size^3_{\mathbf{h}}(\P_{\mathrm{small}}) \lesssim 1. \]
%
%
Combining this information with the general estimate in Lemma \ref{general_estimate} (for which we set $\theta_j = \xi_j$ for $j=1,2,3$, thus forcing the condition $0 \leq \theta_1, \xi_1, \theta_2, \xi_2 \leq 1/2$) and the energy estimates in Propositions  \ref{energy_3_control}, \ref{energy_1_2_control}, we obtain after some algebra (recall that $\sigma = (r - r_0)/r$)
\begin{equation}\label{eqn:main_estimate}
\begin{aligned}
|\Lambda_{\P_{\mathrm{small}}}(f,g,\mathbf{h})|  \lesssim_{\mathfrak{p},\mathfrak{q}} & \Big[\frac{|G|}{|H|}\Big]^{1/r} \Big(\frac{|F|}{|H|}\Big)^{2 \sigma \theta_1 / \mathfrak{p}} |F|^{(1 - \sigma \theta_1)/r_0} \\
& \qquad \times \Big(\frac{|G|}{|H|}\Big)^{2\sigma \theta_2/\mathfrak{q}} |G|^{(\sigma - 2\sigma\theta_2)/r_0} \cdot 1 \cdot |H|^{1/{r'} - 2\sigma \theta_3 / {r_0}} \\
& +  \Big[\frac{|F|}{|H|}\Big]^{1/r} \Big(\frac{|F|}{|H|}\Big)^{2 \sigma \theta_1 / \mathfrak{p}} |F|^{(\sigma - 2\sigma \theta_1)/r_0} \\
& \qquad \times \Big(\frac{|G|}{|H|}\Big)^{2\sigma \theta_2/\mathfrak{q}} |G|^{(1 - \sigma\theta_2)/r_0} \cdot 1 \cdot |H|^{1/{r'} - 2\sigma \theta_3 / {r_0}} \\
 = & |F|^{1/r_0  - 2\sigma(1/r_0 - 1/\mathfrak{p})\theta_1} |G|^{1/r_0 - 2\sigma(1/r_0 - 1/\mathfrak{q})\theta_2} \\
 & \qquad \times |H|^{1/{r'} - 1/r - 2\sigma \theta_3 / {r_0} - 2\sigma \theta_1/ \mathfrak{p} - 2 \sigma \theta_2 /\mathfrak{q} }.
\end{aligned}
\end{equation}
By choosing $\mathfrak{p}, \mathfrak{q}$ large enough, we obtain \eqref{eqn:restricted_weak_type_estimate} for any choice of exponents in the stated range. Notice that in order to prove Theorem \ref{mainthm} by interpolation we don't need the full range of exponents provided by Proposition \ref{prop:r=2}; it suffices to take $\mathfrak{p},\mathfrak{q}$ large but fixed for each $r$ (for example $r_0 = (r+2)/2$ and $\mathfrak{p}= \mathfrak{q} = 1000 r_0$), so that the hypotheses of the interpolation lemma \ref{lemma:interpolation} apply, to conclude Theorem \ref{mainthm}.\\
%
%
Now we are left with showing that \eqref{eqn:main_estimate} holds for $\Lambda_{\P_{\mathrm{large}}}$ as well. In order to do so, we decompose $\P_{\mathrm{large}}$ into $\bigsqcup_{d \in \N} \P_{d}$ where
%
%
\[ \P_d := \Big\{P \in \P_{\mathrm{large}} \text{ s.t. } 1 + \frac{\mathrm{dist}(I_P, E^c)}{|I_P|} \sim 2^d \Big\}; \]
it then suffices to prove that the contribution of $\Lambda_{\P_d}$ is summable in $d$ and the sum is bounded by \eqref{eqn:main_estimate}. 
%
%
Let then $d$ be fixed and observe that if $P \in \P_d$ then $2^{d+O(1)}I_P \not\subset E$, thus as seen above we must have (up to finding a dyadic interval $I$ that contains $2^{d+O(1)}I_P$ and has comparable length, which is always possible)
\[ \fint_{2^{d+O(1)}I_P} |\mathscr{C}f|^{\mathfrak{p}} \lesssim \frac{|F|}{|H|}, \]
and therefore 
\[ \fint_{I_P} |\mathscr{C}f|^{\mathfrak{p}} \lesssim 2^{d} \frac{|F|}{|H|}, \]
and hence by Proposition \ref{size_1_2_control} and H\"{o}lder's inequality
\[ \size^1_f(\P_d) \lesssim_{\mathfrak{p}} 2^{d/\mathfrak{p}} \Big(\frac{|F|}{|H|}\Big)^{1/\mathfrak{p}}. \]
Similarly we have 
\begin{align*}
\sup_{P \in \P_{d}} \fint_{I_P}|\VarC{r_0}f|^{r_0} & \lesssim_\eps 2^d \frac{|F|}{|H|}, \\
\size^2_g(\P_{d}) & \lesssim_{\mathfrak{q}} 2^{d/\mathfrak{q}} \Big(\frac{|G|}{|H|}\Big)^{1/\mathfrak{q}}, \\
\sup_{P \in \P_{d}} \fint_{I_P}|\VarC{r_0}g|^{r_0} & \lesssim_\eps 2^d\frac{|G|}{|H|}.
\end{align*}
%
%
However, for $\size^3_{\mathbf{h}}$ we now have a better estimate, namely for any $P \in \P_d$ it must be
\[ \frac{1}{|I_P|}\int \sum_{\omega\in\Omega} |h_\omega|^{r'} \Phi_{I_P}^{\omega} \lesssim 2^{-dM} \]
for a large $M>0$ of our choice, thanks to the fast decay of the functions $\Phi_{I_P}^{\omega}$ (here for convenience we are writing $\Phi_I^\omega$ for $\Phi_I$ even when $|I|< |\omega|^{-1}$), and this estimate in turn implies the bound 
\[ \size^3_{\mathbf{h}}(\P_d) \lesssim 2^{-dM/{r'}}. \]
%
%
If we apply the general estimate of Proposition \ref{general_estimate} to $\P_d$ as done before we then get 
\begin{align*}
|\Lambda_{\P_d}(f,g,\mathbf{h})|\lesssim_{\mathfrak{p},\mathfrak{q}} 2^{-M'd}  & 
|F|^{1/r_0  - 2\sigma(1/r_0 - 1/\mathfrak{p})\theta_1}|G|^{1/r_0 - 2\sigma(1/r_0 - 1/\mathfrak{q})\theta_2} \\
 & \times |H|^{1/{r'} - 1/r - 2\sigma \theta_3 / {r_0} - 2\sigma \theta_1/ \mathfrak{p} - 2 \sigma \theta_2 /\mathfrak{q} }
\end{align*}
for some large $M'>0$ depending on $M, r$. As this is summable in $d$, the proof is concluded.
\end{proof}
%
%
\begin{remark}\label{remark:motivation_for_smaller_range}
We comment here on why, aside from the logarithmic loss, we cannot recover the same range for $\Lambda$ as in Theorem \ref{thm:smooth_squares_theorem} (\cite{BeneaBernicot}), in which the bilinear frequency projections onto the $\omega$'s are taken to be smooth. If one uses the appropriate version of the above argument in that context, the range obtained is symmetric with respect to $2$, that is one gets estimates for all $r' < p,q < r$ directly, without the need to appeal to interpolation results like Lemma \ref{lemma:interpolation}. The reason behind this is two-fold: firstly there's the fact that in that case all sizes satisfy $\size^{j}(\P)\lesssim 1$ a priori for $j=1,2,3$ (and with this information alone one already obtains the range $2<p,q< r$); and secondly the sizes are controlled by $L^1$-averages instead of $L^{r_0}$-averages as in our case (see Proposition \ref{size_1_2_control}). Thus in the smooth case of \cite{BeneaBernicot} one can effectively bound $\size^1_f(\P_{\mathrm{small}}) \lesssim \min(1, |F|/|H|)$ and similarly for $\size^2$, which then yields the wider range described above. Our use of the $\mathfrak{p},\mathfrak{q}$ powers essentially amounts to a substitute for the condition $\size^{j}(\P)\lesssim 1$, hence the smaller range.\\
Finally, the full range $p,q > r'$ in \cite{BeneaBernicot} is obtained by a further argument involving the localization of sizes and energies; alternatively, one can obtain it by considering the formal adjoints of the bilinear operator. In the non-smooth case both approaches fail: our sizes and energies don't localize well, since we are controlling them with non-local operators; and the formal adjoints cannot be simply reduced to the original operator, so that the analysis developed in here doesn't extend to them automatically.
\end{remark}
\section{Application to bilinear multipliers}\label{section:application}
Let $\Omega$ be a collection of dyadic frequency squares, not necessarily finite and not necessarily disjoint, and let $\mathbf{a}=\{a_{\omega}\}_{\omega \in \Omega}$ be a sequence of complex coefficients; form then the bilinear multiplier $T$ given by 
\[ T_{\mathbf{a}}(f,g)(x) := \sum_{\omega \in \Omega} a_\omega \pi_\omega(f,g)(x). \]
We are interested in finding conditions on $\Omega$ and $\{a_{\omega}\}_{\omega \in \Omega}$ which ensure the $L^p \times L^q \rightarrow L^s$ boundedness of $T$ in some range of exponents $p,q,s$.\\
%
%
Consider the following situation: assume that for some $\beta \in (0,2)$ we have $\|\mathbf{a}\|_{\ell^\beta}<\infty$, and moreover the coefficients $a_\omega$ satisfy the Carleson Condition
\begin{equation}\label{Carleson_condition}
\sum_{\substack{\omega' \in \Omega, \\ \omega' \subset \omega}} |a_{\omega'}|^\beta \leq C |a_{\omega}|^\beta, \qquad \forall \omega \in \Omega.
\end{equation}
Then we argue that the bilinear multiplier $T_{\mathbf{a}}$ is bounded from $L^p \times L^q$ into $L^s$ with $1/p + 1/q = 1/s$ for $\beta < p,q < \beta'$, where $\beta'$ is replaced by $\infty$ if $\beta\leq 1$. Indeed, we partition the collection $\Omega$ as follows: let $n \in \N$ and define the sub-collection
\[ \Omega_n := \{ \omega \in \Omega \text{ s.t. } |a_\omega| \sim 2^{-n}\|\mathbf{a}\|_{\ell^\beta} \}; \]
then clearly 
\begin{equation}\label{eqn:cardinality_Omega_n}
\#\Omega_n \lesssim 2^{\beta n} 
\end{equation}
and moreover every collection $\Omega_n$ is the union of $O(1)$ collections of disjoint dyadic squares. This last fact is due to the Carleson Condition, since for every $\omega_0 \in \Omega_n$ it must be by definition
\[ C |a_{\omega_0}|^\beta \geq \sum_{\substack{\omega \in \Omega_n, \\ \omega \subset \omega_0}} |a_{\omega}|^\beta \sim |a_{\omega_0}|^\beta \#\{ \omega \in \Omega_n \text{ s.t. } \omega \subset \omega_0 \};  \]
thus if we do a generational decomposition of $\Omega_n$ (starting from the collection of maximal elements with respect to inclusion and so on), we will encounter at most $O(1)$ generations, which proves the claim.\\
Assume henceforth for the sake of clarity that for each $n$ the collection $\Omega_n$ consists of disjoint dyadic squares only. If we take $r \in (2,\beta')$ we can bound
\begin{align*}
\Big|\sum_{\omega \in \Omega} a_\omega \pi_\omega(f,g)(x)\Big| \leq & \sum_{n\in\N} \Big(\sum_{\omega \in \Omega_n} |a_{\omega}|^{r'}\Big)^{1/{r'}}\Big(\sum_{\omega \in \Omega_n} |\pi_\omega(f,g)(x)|^r\Big)^{1/r} \\
\sim & \sum_{n\in\N} 2^{-n} \|\mathbf{a}\|_{\ell^\beta} (\#\Omega_n)^{1/{r'}} T_{\Omega_n}^r(f,g)(x).
\end{align*}
By Theorem \ref{mainthm} and triangle inequality we then have that 
\[ \|T_{\mathbf{a}}(f,g)\|_{L^s} \lesssim_{p,q} \|\mathbf{a}\|_{\ell^\beta} \|f\|_{L^p}\|g\|_{L^q} \sum_{n \in \N} 2^{-n} (\#\Omega_n)^{1/{r'}}, \] 
but by \eqref{eqn:cardinality_Omega_n} the sum is bounded by 
\[ \sum_{n \in \N} 2^{-n} 2^{\beta n /{r'}} \lesssim 1, \]
thanks to our choice of $r$.
%
\begin{remark}
The Carleson condition \eqref{Carleson_condition} is introduced to enforce the fact that the collections $\Omega_n$ are made of essentially disjoint squares, and in particular they can be decomposed into at most $O(1)$ collections of disjoint squares. But actually, if we had that for some $\delta<1$ each $\Omega_n$ can be decomposed into at most $O(\#\Omega_n^\delta)$ collections of disjoint squares, we could still bound the multiplier in a (smaller) range.
\end{remark}

\bibliography{bilinear_rubio_de_francia_bibliography}
\bibliographystyle{amsplain}

\end{document}